\DeclareMathOperator{\diag}{diag}
\DeclareMathOperator{\im}{Im}
\DeclareMathOperator{\re}{Re}
\renewcommand{\le}{\leqslant}
\renewcommand{\ge}{\geqslant}
\begin{document}
\title{Miniversal
deformations of
matrices under *congruence and
reducing transformations}

\author[dmi]{Andrii Dmytryshyn}
\ead{andrii@cs.umu.se}
\address[dmi]{Department of Computing Science
and HPC2N, Ume{\aa} University, Sweden}

\author[fut]{Vyacheslav Futorny}
\ead{futorny@ime.usp.br}
\address[fut]{Department of Mathematics, University
of S\~ao Paulo, Brazil}

\author[ser]{Vladimir V.
Sergeichuk\corref{cor}}
\ead{sergeich@imath.kiev.ua}
\address[ser]{Institute of Mathematics,
Tereshchenkivska 3,
Kiev, Ukraine}
\cortext[cor]{Corresponding author}

\newtheorem{theorem}{Theorem}[section]
\newtheorem{lemma}{Lemma}[section]

\theoremstyle{definition}
\newtheorem{definition}{Definition}[section]

\theoremstyle{remark}
\newtheorem{remark}{Remark}[section]
\newtheorem{corollary}{Corollary}[section]
\newtheorem{example}{Example}[section]


\begin{abstract}
V.I. Arnold [Russian Math. Surveys 26
(2) (1971) 29--43] constructed a
miniversal deformation of a square
complex matrix under similarity; that
is, a simple normal form to which not
only a given square matrix $A$ but all
matrices $B$ close to it can be reduced
by similarity transformations that
smoothly depend on the entries of $B$.
We give miniversal deformations of
matrices of sesquilinear forms; that
is, of square complex matrices under
*congruence, and construct an analytic
reducing transformation to a miniversal
deformation. Analogous results for
matrices under congruence were obtained
by the authors in [Linear Algebra Appl.
436 (2012) 2670--2700].
\end{abstract}

\begin{keyword}
Sesquilinear forms\sep *Congruence
transformations\sep Miniversal
deformations

\MSC 15A21\sep 15A63\sep 47A07
\end{keyword}

 \maketitle

\section{Introduction}

The reduction of a matrix to its Jordan
form is an unstable operation: both the
Jordan form and the reduction
transformation depend discontinuously
on the entries of the original matrix.
Therefore, if the entries of a matrix
are known only approximately, then it
is unwise to reduce it to Jordan form.
Furthermore, when investigating a
family of matrices smoothly depending
on parameters, then although each
individual matrix can be reduced to a
Jordan form, it is unwise to do so
since in such an operation the
smoothness relative to the parameters
is lost.

For these reasons, Arnold \cite[Theorem
4.4]{arn} (see also \cite{arn2,arn3})
constructed miniversal deformations of
complex matrices under similarity; that
is, a simple normal form to which not
only a given square matrix $A$ but all
matrices $B$ close to it can be reduced
by similarity transformations that
smoothly depend on the entries of $B$.
Miniversal deformations were also
constructed
\begin{itemize}
  \item for real matrices with
      respect to similarity by
      Galin \cite{gal} (see also
      \cite{arn2,arn3}) and
      simplified by Garcia-Planas
      and Sergeichuk
      \cite{gar_ser};

  \item for complex matrix pencils
      by Edelman, Elmroth, and
      K\r{a}gstr\"{o}m \cite{kag};
      for complex and real matrix
      pencils and contragredient
      matrix pencils (i.e., pairs
      of linear maps
      $U\rightleftarrows V$) by
      Garcia-Planas, Klimenko, and
      Sergeichuk
      \cite{gar_ser,k-s_triang};

   \item for matrices up to
       congruence by the authors
       \cite{f_s}; for pairs of
       skew-symmetric or symmetric
       matrices up to congruence by
       Dmytryshyn \cite{dm1,dm2};

\item for matrices up to unitary
    similarity by Benedetti and
Cragnolini \cite{ben}; for matrices
of selfadjoint operators on a
complex
       or real vector space with
       scalar product given by a
       skew-symmetric, or
       symmetric, or Hermitian
       nonsingular form in
       \cite{djo,gal2,pat1,pat3}.
       Deformations of Hermitian
       matrices were
studied by von Neumann and Wigner
\cite{von}.

\end{itemize}

All matrices that we consider are
complex matrices.

The main results of this paper are the
following:
\begin{itemize}
  \item we construct a miniversal
      deformation of a square
      complex matrix $A$ with
      respect to \emph{*congruence
      transformations} $S^*AS$ ($S$
      is nonsingular); i.e., we
      give a simple normal form
      $B_{\text{mindef}}$ to which
      all matrices $B$ close to $A$
      can be reduced by a
      *congruence transformation
that is an analytic function of the
entries of both $B$ and its complex
conjugate $\bar B$;

  \item we construct this analytic
      *congruence transformation.
\end{itemize}

Applications of Arnold's miniversal
deformations of matrices under
similarity are based, in particular, on
the fact that a matrix and its Arnold
normal form have the same eigenvalues
\cite{mai2000,mai2001}. In a similar
way, possible applications of the
normal form $B_{\text{mindef}}$ can be
based on the fact that $B$ and
$B_{\text{mindef}}$ have the same
invariants with respect to *congruence,
while $B_{\text{mindef}}$ has a very
simple structure.  A preliminary
version of this article was used in
\cite{f_*diagr} for constructing the
Hasse diagram of the closure ordering
on the set of *congruence classes of
$2\times 2$ matrices.

Without loss of generality, we assume
that $A$ is a canonical matrix for
*congruence; we use the *congruence
canonical matrices given by Horn and
Sergeichuk \cite{hor-ser}.

For the reader convenience, we first
give the normal form
$B_{\text{mindef}}$ in terms of
analytic matrix functions in Theorem
\ref{teo2} of Section \ref{s2}. It is
formulated in terms of miniversal
deformations in Section \ref{sect2}.
The parameters of miniversal
deformation of a square matrix $A$ are
independent and real; their number is
equal to the codimension over $\mathbb
R$ of the *congruence class of $A$;
this codimension was calculated by De
Ter\'an and Dopico \cite{t_d-1} (as the
codimension of the *congruence orbit of
$A$). The codimension of the congruence
class of a square matrix was calculated
by De Ter\'an and Dopico \cite{t_d}
and, independently, by the authors
\cite{f_s}. The codimensions of the
congruence classes of a pair of
skew-symmetric matrices and a pair of
symmetric matrices were calculated by
Dmytryshyn, K\r{a}gstr\"{o}m, and
Sergeichuk \cite{par-cos,par-symm}.

We prove Theorem \ref{teo2} in Section
\ref{kku} (note that the proofs in
Sections \ref{sub1}, \ref{sub4}, and
\ref{sub5} are very close to the proofs
in \cite[Sections 5.3, 6.3, 7.2]{f_s}).
The proof is based on Theorem
\ref{ttft}, which gives a method for
constructing both a miniversal
deformation and an analytic *congruence
transformation that reduces all
matrices in a neighborhood of a given
square matrix $A$ to the miniversal
deformation of $A$. A knowledge of this
transformation is important for
applications of miniversal
deformations.

Methods of constructing transformations
to miniversal deformations were
developed for matrices under similarity
and for matrix pencils by Garcia-Planas
and Mailybaev
\cite{gar_mai,mai2000,mai2001} (see
also Schmidt \cite{sch,sch1} and
Stolovitch \cite{sto}) and for matrices
under congruence by the authors
\cite{f_s}; these transformations are
analytic functions of the entries of
matrices or matrix pencils in a
neighborhood of a given matrix or
pencil. Unlike them, the *congruence
transformation $B\mapsto
B_{\text{mindef}}$ that we construct in
Section \ref{sect4} is an analytic
function of the entries of both $B$ and
$\bar B$. Klimenko \cite{klim} proved
that the words ``and its complex
conjugate $\bar B$'' cannot be deleted:
the reducing transformation that is an
analytic function only of the entries
of $B$ does not exist even if $A$ is
the $1\times 1$ identity matrix $I_1$.
Recall that the complex conjugate
function $z\mapsto \bar z$ on $\mathbb
C$ is not analytic at $0$.

\section{The main
theorem in terms of analytic
functions}\label{s2}

Define the $n\times n$ matrices:
\begin{equation*}\label{1aa}
J_n(\lambda):=\begin{bmatrix}
\lambda&1&&0\\&\lambda&\ddots&\\&&\ddots&1
\\ 0&&&\lambda
\end{bmatrix},\qquad
\Delta_{n}=%
\begin{bmatrix}
0 &  &  & 1\\
&  &
\udots & i\\
& 1 &
\udots & \\
1 & i &  & 0
\end{bmatrix}.
\end{equation*}

We use the following canonical form of
complex matrices for *congruence.

\begin{theorem}[{\cite[Theorem 4.5.21]{hor_John}}]
\label{t2a}  Each square complex matrix
is *congruent to a direct sum, uniquely
determined up to permutation of
summands, of matrices of the three
types:
\begin{equation}\label{table2}
H_{2m}(\lambda ):=
\begin{bmatrix}0&I_m\\
J_m(\lambda) &0
\end{bmatrix}\
(\lambda \in\mathbb C,\ |\lambda |>1),
\ \ \mu \Delta_{n}\
(\mu \in\mathbb C,\ |\mu |=1) ,\ \
J_k(0).
\end{equation}
\end{theorem}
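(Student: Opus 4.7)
The plan is to reduce Theorem~\ref{t2a} to the Jordan classification of the \emph{cosquare} $\Phi_A:=A^{-\ast}A$ of the nonsingular part of $A$, preceded by a regularization that splits off the singular summands. First I would show by induction on size that every complex square $A$ is *congruent to $A_{\text{reg}}\oplus N$ with $A_{\text{reg}}$ nonsingular and $N$ a direct sum of blocks $J_k(0)$: given a nonzero vector in $\ker A$ or $\ker A^{\ast}$, a standard *congruence manipulation — simultaneously handling both one-sided radicals — isolates one $J_k(0)$ summand and strictly decreases the dimension. The fact that only $J_k(0)$ appears (rather than more general Kronecker-type singular blocks, as in the bilinear case) is a classical feature of the sesquilinear setting.

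For the nonsingular part, observe that under $A\mapsto S^{\ast}AS$ the cosquare transforms by similarity, $\Phi_A\mapsto S^{-1}\Phi_A S$. The identity $A\,\Phi_A^{-1}A^{-1}=\Phi_A^{\ast}$ shows $\Phi_A^{-1}$ is similar to $\Phi_A^{\ast}$, and hence to $\overline{\Phi_A}$; consequently the eigenvalues of $\Phi_A$ come in conjugate-reciprocal pairs $\{\lambda,1/\bar\lambda\}$ with matching Jordan structure, while the unit-modulus eigenvalues are self-paired. A direct block computation then verifies that the cosquare of $H_m(\lambda)$ realises the pair $J_m(\lambda)\oplus J_m(1/\bar\lambda)$ (with $|\lambda|>1$), and that the cosquare of $\mu\varDelta_n$ with $|\mu|=1$ realises a single Jordan block with eigenvalue $\mu^{2}$, using the stated property that $\varDelta_n^{-\ast}\varDelta_n$ is similar to $J_n(1)$. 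Assembling these pieces across the root subspaces of $\Phi_{A_{\text{reg}}}$ delivers existence.

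The main obstacle will be \emph{uniqueness} in the self-paired case $|\mu|=1$, since the cosquare records only $\mu^{2}$ and by itself cannot distinguish $\mu\varDelta_n$ from $(-\mu)\varDelta_n$. To pin $\mu$ down I would attach an additional invariant to the corresponding root subspace — for instance, via the Hermitian pencil $H+zK$ obtained from the decomposition $A=H+iK$ into Hermitian parts, whose sign characteristic on the unit-circle spectrum separates $\mu$ from $-\mu$. Uniqueness of the $H_m(\lambda)$ summands and of the multiplicities of the $J_k(0)$ summands then follows from the Jordan structure of $\Phi_{A_{\text{reg}}}$ together with the dimensions of the iterated one-sided radicals of $A$.
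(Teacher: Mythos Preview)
The paper does not prove Theorem~\ref{t2a}: it is quoted from \cite{hor-ser, hor-ser_regul, hor-ser_can}, and the argument in those references proceeds along exactly the lines you sketch --- a regularization step that peels off the $J_k(0)$ summands, followed by classification of the nonsingular part through the similarity class of the cosquare $A^{-\ast}A$, with the $\pm\mu$ ambiguity on the unit circle resolved by an auxiliary sign-type invariant. So your outline is correct and coincides with the approach in the cited sources rather than offering an alternative.

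Two points to tighten. First, your existence paragraph verifies only that the canonical blocks $H_m(\lambda)$ and $\mu\varDelta_n$ realise the required cosquare Jordan types; it does not yet show that an \emph{arbitrary} nonsingular $A$ whose cosquare has a given Jordan form is *congruent to the corresponding direct sum of canonical blocks. Passing from ``same cosquare up to similarity, plus sign data'' to ``*congruent'' is a genuine lemma --- indeed it is where most of the labour in \cite{hor-ser_can} lies --- and your phrase ``assembling these pieces across the root subspaces'' glosses over it. Second, the parenthetical about ``more general Kronecker-type singular blocks'' in the bilinear case is inaccurate: for a single square matrix under ordinary congruence the singular part is likewise a direct sum of nilpotent Jordan blocks $J_k(0)$; Kronecker blocks appear only for matrix \emph{pencils}. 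This does not affect your argument but should be corrected.
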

This canonical form was obtained in
\cite{hor-ser} basing on \cite[Theorem
3]{ser_izv} and was generalized to
other fields in
\cite{hor-ser_anyfield}. A direct proof
that this form is canonical is given in
\cite{hor-ser_regul, hor-ser_can}.

A matrix in which each entry is $0$,
$*$, $\circ$, or $\bullet$ is called a
\emph{$(0,\!*,\!\circ,\!\bullet)$
matrix}. We use the following
$(0,\!*,\!\circ,\!\bullet)$ matrices:

$\bullet$ The $m\times n$ matrices
\[0^{\swarrow}:=\left[\begin{MAT}(e){cccc}
*&&&\\
\vdots&&0&\\
*&&&\\
\end{MAT}\right]\text{ if $m\le n$ or }
\left[\begin{MAT}(e){ccc}
&&\\
&0&\\
&&\\
*&\cdots&*\\
\end{MAT}\right] \text{ if $m\ge n$},
\]
\[0^{\swvdash}:=\left[\begin{MAT}(b){cccccccc}
\vdots&&&&&&&\\
0&&&&&&&\\
*&&&&0&&&\\
0&&&&&&&\\
*&&&&&&&
\\
\end{MAT}\right]\text{ if $m\le n$ or }
\left[\begin{MAT}(e){ccccc}
&&&&\\
&&&&\\
&&0&&\\
&&&&\\
&&&&\\
&&&&\\
*&0&*&0&\cdots\\
\end{MAT}\right]\text{ if $m\ge n$}
\]
(choosing among the left and right
matrices in these equalities, we take a
matrix with the minimum number of
stars; we can take any of them if
$m=n$).

$\bullet$ The matrices
\[0^{\nwarrow},\quad 0^{\nearrow}, \quad\text{and}\quad 0^{\searrow}
\] that are obtained by
rotating $0^{\swarrow}$ by $90$, $180$,
and $270$ degrees clockwise.

$\bullet$ The $n\times n$ matrices
\begin{align*}\label{ddw}
0^{\sespoon}:=
  \begin{cases}
    \diag(*,\dots,*,0,\dots,0)& \text{if $n=2k$}, \\
    \diag(*,\dots,*,\circ,0,\dots,0)& \text{if $n=2k+1$},
  \end{cases}\\
0^{\sefilledspoon}:=
  \begin{cases}
    \diag(*,\dots,*,0,\dots,0)& \text{if $n=2k$}, \\
    \diag(*,\dots,*,\bullet,0,\dots,0)&
\text{if $n=2k+1$},
  \end{cases}
\end{align*}
in which $k$ is the number of $*$'s.

$\bullet$ The $m\times n$ matrices
\begin{equation*}\label{bjhf}
0^{\updownarrow}:=\left[\begin{MAT}(b){ccc}
*&\cdots&*\\
&&\\
&0&\\
&&\\
\end{MAT}\right]\quad \text{or}\quad
\left[\begin{MAT}(b){ccc}
&&\\
&0&\\
&&\\
*&\cdots&*\\
\end{MAT}\right]\end{equation*}
($0^{\updownarrow}$ can be taken in any
of these forms), and
\begin{equation}\label{hui}
{\cal P}_{mn}:=\begin{bmatrix}
\begin{matrix}
0&\dots& 0
  \\
\vdots&\ddots& \vdots
\end{matrix}&0\\
\begin{matrix}
0& \dots&
0\end{matrix}&
\begin{matrix}
0\ *\ \dots\ *
\end{matrix}
\end{bmatrix}\quad
\text{in which } m\le n
\end{equation}
(${\cal P}_{mn}$ has $n-m-1$ stars if
$m<n$).

\begin{definition}\label{kkj}
Let
$A_{\text{can}}=A_1\oplus\dots\oplus
A_t$ be a *congruence canonical matrix,
in which all summands $A_i$ are of the
form \eqref{table2} and are arranged
thus:
\begin{equation}\label{juw}
A_{\text{can}}=
\bigoplus_i
H_{2p_i}(\lambda_i)
 \oplus
\bigoplus_j\mu_j\Delta_{q_j}
 \oplus
\bigoplus_l
J_{r_l}(0),\qquad
r_1\ge r_2\ge\dots
\end{equation}
Let us construct a
$(0,\!*,\!\circ,\!\bullet)$ matrix
\begin{equation*}\label{grsd}
{\cal D}\langle A_{\text{can}}\rangle
=\begin{bmatrix}
{\cal
D}_{11}&\dots&{\cal
D}_{1t}
 \\
\vdots&\ddots&\vdots\\
{\cal
D}_{t1}&\dots&{\cal
D}_{tt}
\end{bmatrix}
\end{equation*}
partitioned conformally to the
partition of $A_{\text{can}}$. Its
blocks ${\cal D}_{ij}$ are defined by
the following equalities, in which
\begin{equation*}\label{lhs}
{\cal D}\langle A_i\rangle:={\cal
D}_{ii},\qquad {\cal
D}\langle A_i,A_j\rangle  :=({\cal
D}_{ji},{\cal
D}_{ij})\  \text{with
}i<j,
\end{equation*}
$|\lambda|>1$, $|\lambda'|>1$, and
$|\mu |=|\mu'|=1$:
\begin{itemize}
  \item[{\rm(i)}] The diagonal
      blocks of ${\cal D}\langle
      A_{\text{can}}\rangle $ are
      defined by
\begin{align}\label{KEV}
{\cal
D}\langle H_{2m}(\lambda)\rangle &=
    \begin{bmatrix}
0&0
 \\ 0^{\swarrow}&0
\end{bmatrix},
     \\ \nonumber
{\cal D}\langle \mu \Delta_n\rangle &=
  \begin{cases}
    0^{\sespoon}& \text{if }
\mu\notin\mathbb R, \\
    0^{\sefilledspoon} &
    \text{if }\mu\in \mathbb R,
  \end{cases} \begin{matrix}
\text{($0^{\sefilledspoon}$ can be used
\qquad\qquad} \\
\text{\qquad instead of
$0^{\sespoon}$ if $\mu\notin \mathbb R\cup
i\mathbb R$)}
\end{matrix}
     \\
  \nonumber
{\cal
D}\langle J_n(0)\rangle &=0^{\swvdash}
\end{align}

  \item[{\rm(ii)}] Each
      off-diagonal block of ${\cal
      D}\langle
      A_{\text{can}}\rangle $ whose
      horizontal and vertical
      strips contain summands of
      $A_{\text{\rm can}}$ of
      \emph{the same type} is
      defined by
\begin{align}
\label{lsiu1}
{\cal D}
\langle H_{2m}(\lambda),\,
H_{2n}(\lambda')\rangle
       &=
  \begin{cases}
(0,\:0) &\text{if
$\lambda\ne\lambda'$},
          \\[1mm]
    \left(\begin{bmatrix}
0&0^{\nearrow}
 \\ 0^{\swarrow}&0
\end{bmatrix},\:0
\right)
 &\text{if $\lambda=\lambda'$},
  \end{cases}
     \\ \nonumber
{\cal D}
\langle \mu \Delta_m, \mu' \Delta_n\rangle &=
  \begin{cases}
    (0,\:0) & \text{if }\mu\ne\pm\mu', \\
    (0^{\nwarrow},\:0) & \text{if }\mu=
\pm\mu',
  \end{cases}
     \\ \nonumber
{\cal D}
\langle J_m(0),J_n(0)\rangle &=
  \begin{cases}
(0^{\swvdash},\:
0^{\swvdash})
 & \text{if $m\ge n$ and $n$ is even},
    \\
(0^{\swvdash}+{\cal
P}_{nm},\:0^{\swvdash})
 & \text{if $m\ge n$ and $n$ is odd}.
  \end{cases}
\end{align}

  \item[{\rm(iii)}] Each
      off-diagonal block of ${\cal
      D}\langle
      A_{\text{can}}\rangle $ whose
      horizontal and vertical
      strips contain summands of
      $A_{\text{\rm can}}$ of
      \emph{distinct types} is
      defined by
\begin{align*}
{\cal D}
\langle H_{2m}(\lambda),\mu \Delta_n\rangle &= (0,\: 0),
                     \\
{\cal D}
\langle H_{2m}(\lambda),J_n(0)\rangle =
{\cal D}
\langle \mu \Delta_m,J_n(0)\rangle &=
  \begin{cases}
(0,\: 0)
 & \text{if $n$ is even}, \\
(0^{\updownarrow},\:0)
 & \text{if $n$ is odd}.
  \end{cases}
\end{align*}
\end{itemize}
\end{definition}

For each $(0,\!*,\!\circ,\!\bullet)$
matrix $\cal D$, we denote by
\begin{equation}\label{kud}
{\cal
D}({\mathbb C})
\end{equation}
the real vector space of all matrices
obtained from $\cal D$ by replacing its
entries $*$, $\circ$, and $\bullet$ in
${\cal D}$ by complex, real, and pure
imaginary numbers, respectively.
Clearly, $\dim_{\mathbb R}{\cal
D}({\mathbb C})$ is equal to the number
of circles and bullets plus twice the
number of stars in ${\cal D}$.

Locally speaking, a function of several
variables is \emph{analytic} if it can
be given as a power series in those
variables. A matrix is said to be an
\emph{analytic function} of several
complex (or real) parameters if its
entries are analytic functions of these
parameters.

The following theorem is our main
result; it will be reformulated in
terms of miniversal deformations in
Theorem \ref{teojy}.

\begin{theorem}\label{teo2}
Let $A_{\text{\rm can}}$ be a canonical
matrix for *congruence, and let ${\cal
D}:={\cal D}\langle A_{\text{\rm
can}}\rangle $ be its
$(0,\!*,\!\circ,\!\bullet)$ matrix from
Definition \ref{kkj}. Then there exists
a neighborhood $U$ of\/ $0_n$ in
$\mathbb C^{n\times n}$ such that all
matrices $A_{\text{\rm can}}+X$ with
$X\in U$ are simultaneously reduced by
some transformation
\begin{equation}\label{tef}
{\cal
S}(X)^* (A_{\text{\rm can}}+X) {\cal
S}(X),\quad\begin{matrix}
\text{${\cal S}:U\to \mathbb
C^{n\times n}$
is an analytic function}\\
\text{of the entries of $X\in U$ and its
complex}\\
\text{conjugate $\bar X$, ${\cal
S}(X)$ is nonsingular for}\\
 \text{all $X\in U$, and
${\cal
S}(0_n)=I_n$},
\end{matrix}
\end{equation}
to matrices from $A_{\text{\rm can}}
+{\cal D}(\mathbb C)$; their entries
are analytic functions of the entries
of $X$ and $\bar X$ on $U$. The number
$\dim_{\mathbb R}{\cal D}({\mathbb C})$
is the smallest that can be made by
transformations \eqref{tef}; it is
equal to the codimension over $\mathbb
R$ of the *congruence class of
$A_{\text{\rm can}}$.
\end{theorem}

\begin{remark}\label{ksy}
The transforming matrix $S(X)$ in
\eqref{tef} does not always can be
taken as an analytic function of the
entries of $X$: if
$A_{\text{can}}=I_1$, then
$A_{\text{can}} +{\cal D}\langle
A_{\text{can}}\rangle =I_1+[\bullet]$
and by Theorem \ref{teo2} there exist a
neighborhood $U$ of $0$ and continuous
mappings $s:U\to \mathbb C$ and
$\varphi: U\to i\mathbb R$ such that
$s(0)=1$ and
\begin{equation*}\label{kir}
[s(x)]^*[1+x][s(x)]=[1+\varphi(x)]
\end{equation*}
for all $x\in U$. Klimenko
{\rm\cite{klim}} proved that $s$ cannot
be an analytic function. In contrast to
this, the transforming matrices for
matrices under similarity
{\rm\cite{arn}}, matrix pencils under
strict equivalence {\rm\cite{kag}}, and
matrices under congruence
{\rm\cite{f_s}} always can be taken
analytic.
\end{remark}

\begin{remark}\label{Hks}
The transforming matrix $S(X)$ in
\eqref{tef} can be considered as an
analytic function of the entries of the
real and imaginary parts of $X$ since
$X=\re X+i\im X$ and $\bar X=\re X-i\im
X$.
\end{remark}

\begin{remark}\label{HRe}
Theorem \ref{teo2} is easily extended
to all pairs of Hermitian matrices that
are sufficiently close to a given pair
of Hermitian matrices (i.e., to pairs
of Hermitian forms that are close to a
given pair of Hermitian forms). All one
has to do is to express all matrices
from $A_{\text{\rm can}} +{\cal
D}(\mathbb C)$ as the sum ${H}+i{K}$ in
which ${H}$ and ${K}$ are Hermitian
matrices. The canonical Hermitian pairs
$({H}_{\text{\rm can}}, {K}_{\text{\rm
can}})$ such that ${H}_{\text{\rm
can}}+ i{K}_{\text{\rm
can}}=A_{\text{\rm can}}$ were
described in {\rm\cite[Theorem
1.2(b)]{hor-ser_can}}.
\end{remark}

For each $A\in{\mathbb C}^{n\times n}$
and a small matrix $X\in{\mathbb
C}^{n\times n}$,
\[
(I+X)^*A(I+X)
=A+\underbrace{X^* A+ AX}
_{\text{small}}
+\underbrace{X^* AX}
_{\text{very small}}
\]
and so the *congruence class of $A$ in
a small neighborhood of $A$ can be
obtained by a very small deformation of
the real affine matrix space $\{A+ X^*
A+ AX\,|\,X\in{\mathbb C}^{n\times
n}\}$. (By the local Lipschitz property
\cite{rodm}, the transforming matrix
$S$ to a matrix $S^*AS$ near $A$ can be
taken in the form $I+X$ with a small
$X$.) The real vector space
\[
T(A):=\{X^*A+AX\,|\,X\in{\mathbb
C}^{n\times n}\}
\]
is the tangent space to the *congruence
class of $A$ at the point $A$.  The
numbers $\dim_{\mathbb R} T(A)$ and
$2n^2-\dim_{\mathbb R} T(A)$ are called
the \emph{dimension} and, respectively,
\emph{codimension} over ${\mathbb R}$
of the *congruence class of $A$.

For each $A\in{\mathbb C}^{n\times n}$
there exists a
$(0,\!*,\!\circ,\!\bullet)$ matrix
${\cal D}$ such that ${\mathbb
C}^{\,n\times n}=T(A) \oplus_{\mathbb
R} {\cal D}({\mathbb C})$ because there
exists a real space $V$ generated by
matrices of the form $E_{kl}$ and
$iE_{kl}$ ($E_{kl}$ are the matrix
units) such that ${\mathbb
C}^{\,n\times n}=T(A) \oplus_{\mathbb
R} V$.

\emph{We prove Theorem \ref{teo2} as
follows:} first we show in Theorem
\ref{ttft} that each
$(0,\!*,\!\circ,\!\bullet)$ matrix
${\cal D}$ that satisfies
\begin{equation}\label{jyr}
{\mathbb C}^{\,n\times
n}=T(A_{\text{can}})
 \oplus_{\mathbb R} {\cal
D}({\mathbb C})
\end{equation}
can be taken instead of ${\cal
D}\langle A_{\text{can}}\rangle $ in
Theorem \ref{teo2}; then we verify in
Section \ref{kku} that ${\cal D}={\cal
D}\langle A_{\text{can}}\rangle $ from
Definition \ref{kkj} satisfies
\eqref{jyr}. The latter imply that
$\dim_{\mathbb R}{\cal D}({\mathbb C})$
is the \emph{codimension over $\mathbb
R$ of the *congruence class of
$A_{\text{\rm can}}$}; it was
previously calculated by De Ter\'an and
Dopico \cite{t_d-1}. Miniversal
deformations of matrix pencils and
contagredient matrix pencils and of
matrices under congruence were
constructed in \cite{f_s,gar_ser} by
analogous methods.

The method of constructing miniversal
deformations that is based on a direct
sum decomposition analogous to
\eqref{jyr} was developed by Arnold
\cite{arn} for matrices under
similarity (see also \cite{arn2} and
\cite[Section 1.6]{arn4}). It was
generalized by Tannenbaum \cite[Part V,
Theorem 1.2]{tan} to a Lie group acting
on a complex manifold (see also
\cite[Theorem 2.3]{gar_mai}, and
\cite[Theorem 2.1]{gar_ser} for
deformations of quiver
representations).  In these cases the
reducing transformations can be taken
analytic (without complex conjugation
of matrix entries as in \eqref{tef}).

\begin{example}
    \label{colh}
Let $A$ be any $2\times 2$ matrix. Then
all matrices $A+X$ that are
sufficiently close to $A$ can be
simultaneously reduced by
transformations $ {\cal S}(X)^* (A+X)
{\cal S}(X)$, in which ${\cal S}(X)$ is
an analytic function of the entries of
$X$ and $\bar X$ in a neighborhood of
$0_2$ and ${\cal S}(0_2)$ is
nonsingular, to one of the following
forms:
\begin{align*}
&\begin{bmatrix} 0&0\\
0&0
 \end{bmatrix}+
\begin{bmatrix}
 *&*\\ *&*
 \end{bmatrix},
                            &&
\begin{bmatrix} \lambda &0\\
0&0
 \end{bmatrix}+
\begin{bmatrix}
\varepsilon_{\lambda}&0\\ *&*
 \end{bmatrix}\ (|\lambda |=1) ,
                         \\&
\begin{bmatrix} \lambda &0\\
0&\pm\lambda
 \end{bmatrix}+
\begin{bmatrix}
\varepsilon_{\lambda}&0\\ *&
\delta_{\lambda}
 \end{bmatrix}\ (|\lambda |=1),
                         &&
\begin{bmatrix} \lambda &0\\
0&\mu
 \end{bmatrix}+
\begin{bmatrix}
 \varepsilon_{\lambda}&0\\
 0&\delta_{\mu}
 \end{bmatrix}
\begin{array}{l}
(\lambda \ne\pm\mu, \\
\ \ |\lambda|=|\mu |= 1),  \\
\end{array}
                       \\&
\begin{bmatrix} 0&1\\
\lambda  &0
 \end{bmatrix}+
\begin{bmatrix}
 0&0\\ *&0
 \end{bmatrix}\ (|\lambda | < 1),
                         &&
\begin{bmatrix} 0&\lambda \\
\lambda  &\lambda i
 \end{bmatrix}+
\begin{bmatrix}
 *&0\\ 0&0
 \end{bmatrix}\ (|\lambda|=1).
 \hspace{-12pt}
\end{align*}
Each of these matrices has the form
$A_{\rm can}+{D}$, in which $A_{\rm
can}$ is a direct sum of blocks of the
form \eqref{table2}; the stars in ${D}$
are complex numbers; the numbers
$\varepsilon_{\nu}$ and $\delta_{\nu}$
for each $\nu\in\mathbb C$ are either
real if $\nu
 \notin\mathbb R$ or pure imaginary
if $\nu \in\mathbb R$. (Clearly, ${D}$
tends to zero as $X$ tends to zero.)
For each $A_{\rm can}+{\cal D}$, twice
the number of its stars plus the number
of its entries of the form
$\varepsilon_{\lambda},\delta_{\lambda},
\delta_{\mu}$ is equal to the
codimension over $\mathbb R$ of the
*congruence class of $A_{\rm can}$.
\end{example}

\section{The main theorem in terms of
deformations} \label{sect2}

The purpose of this section (which is
not used in the rest of the paper) is
to extend Arnold's notion of miniversal
deformations for matrices under
similarity to matrices under
*congruence and to reformulate Theorem
\ref{teo2} in these terms. Arnold
\cite{arn,arn3} defines a deformation
of an $n\times n$ matrix $A$ as an
analytic map ${\cal A}: (\mathbb
C^r,\underline 0) \to ({\mathbb
C}^{n\times n},A)$ from a neighborhood
of $\underline 0=(0,\dots,0)$ in
$\mathbb C^r$ to ${\mathbb C}^{n\times
n}$ such that ${\cal A}(\underline
0)=A$; Remarks \ref{ksy} and \ref{Hks}
force us to define a deformation in the
case  of *congruence as an analytic map
from a neighborhood of $\underline 0$
in $\mathbb R^r$ to ${\mathbb
C}^{n\times n}$.

An \emph{$\mathbb R$-deformation} of a
matrix $A\in{\mathbb C}^{n\times n}$
with the \emph{parameter space}
$\mathbb R^r$ is an analytic map
\[
{\cal A}: (\mathbb R^r,
\underline 0)\to ({\mathbb
C}^{n\times n},A)
\]
from a neighborhood of $\underline 0$
in $\mathbb R^r$ to ${\mathbb
C}^{n\times n}$ such that ${\cal
A}(\underline 0)=A$. (Thus, ${\cal
A}={\cal A}(x_1,\dots,x_r)$ is a
parameter matrix whose parameters
$x_1,\dots,x_r$ are real numbers.)

Let ${\cal A}$ and ${\cal B}$ be two
$\mathbb R$-deformations of
$A\in{\mathbb C}^{n\times n}$ with the
same parameter space $\mathbb R^r$. We
consider ${\cal A}$ and ${\cal B}$ as
\emph{equal} if they coincide on some
neighborhood of $\underline 0$ in
$\mathbb R^r$. We say that ${\cal A}$
and ${\cal B}$ are \emph{*congruent} if
there exists an $\mathbb R$-deformation
${\cal S}: (\mathbb R^r,\underline
0)\to ({\mathbb C}^{n\times n},I_n)$ of
$I_n$ such that
\begin{equation*}\label{kft}
{\cal
S}(\underline x)^*
{\cal A}(\underline x)
{\cal S}(\underline x)=
{\cal B}(\underline x)
\end{equation*}
for all $\underline x=(x_1,\dots, x_r)$
in a neighborhood of $\underline 0$ in
$\mathbb R^r$.

Let ${\cal A}:(\mathbb R^r,\underline
0)\to ({\mathbb C}^{n\times n},A)$ and
${\cal B}:(\mathbb R^s,\underline 0)
\to ({\mathbb C}^{n\times n},A)$ be two
$\mathbb R$-deformations of $A$. We say
that an analytic map $ \varphi:(\mathbb
R^s,\underline 0)\to (\mathbb
R^r,\underline 0)$ \emph{embeds ${\cal
B}$ into ${\cal A}$} if $ {\cal
B}(\underline y)= {\cal
A}(\varphi(\underline y))$ for all
$\underline y$ in a neighborhood of
$\underline 0\in \mathbb R^s$.

\begin{definition}\label{d}
An $\mathbb R$-deformation ${\cal A}:
(\mathbb R^r,\underline 0) \to
({\mathbb C}^{n\times n},A)$ is called
\begin{itemize}
     \item \emph{versal} if every
         $\mathbb R$-deformation of
         $A$ is *congruent to an
         $\mathbb R$-deformation of
         $A$ that is embedded into
         ${\cal A}$;

\item \emph{miniversal} if it is
    versal and there is no versal
    $\mathbb R$-deformation of $A$
    whose parameter space has a
    dimension less than $\dim
    _{\mathbb R}V$.
     \end{itemize}
\end{definition}

\begin{definition}\label{dhy}
Let ${\cal D}$ be a
$(0,\!*,\!\circ,\!\bullet)$ matrix  of
size $n\times n$. Replace
      each $(k,l)$ entry $*$ by
      $x_{kl}+iy_{kl}$,
each $(k,l)$ entry $\circ$ by
      $x_{kl}$,
and each $(k,l)$ entry $\bullet$ by
      $iy_{kl}$, in which all
      $x_{kl},y _{kl}$
      are
 independent real
parameters. Denote the obtained
parameter matrix by ${\cal D}(x\cup
y)$, in which
\[
x:=\{x_{ij}\,|\,(i,j)\in {\cal
I}_*\cup {\cal I}_{\circ}\},\qquad
y:=\{y_{ij}\,|\,(i,j)\in {\cal
I}_*\cup {\cal I}_{\bullet}\},
\]
and
\begin{equation}\label{jpx}
{\cal
I}_*,\ {\cal I}_{\circ},\ {\cal
I}_{\bullet},\  {\cal I}_0\subseteq
\{1,\dots,n\}\times \{1,\dots,n\}
\end{equation}
are the sets of indices of all stars,
circles, bullets, and zeros in ${\cal
D}$. For each $n\times n$ matrix $A$,
the $\mathbb R$-deformation $A+{\cal
D}(x\cup y)$ of $A$ with the parameter
space ${\cal D}(\mathbb C)$ (see
\eqref{kud}) is called \emph{simplest}.
\end{definition}

For example, if all entries of $\cal D$
are stars, then it defines the simplest
$\mathbb R$-deformation
\begin{equation}\label{edr}
\mathcal U(x\cup y):=A+
\begin{bmatrix}
  x_{11}+iy_{11} & \dots &
  x_{1n}+iy_{1n} \\
  \vdots & \ddots & \vdots \\
  x_{n1}+iy_{n1} & \dots &
  x_{nn}+iy_{nn}  \\
\end{bmatrix}
\end{equation}
with the parameter space \[\mathbb
C^{n\times n}=
\begin{bmatrix}
  \mathbb R+i \mathbb R & \dots &
  \mathbb R+i \mathbb R \\
  \vdots & \ddots & \vdots \\
\mathbb R+i \mathbb R & \dots &
  \mathbb R+i \mathbb R \\
\end{bmatrix}.\]
This deformation is \emph{universal} in
the sense that every $\mathbb
R$-deformation $\cal B$ of $A$ is
embedded into it (since every entry of
$\cal B$ is a complex-valued function
of real variables).

Since each square matrix is *congruent
to its canonical matrix, it suffices to
construct miniversal $\mathbb
R$-deformations of all canonical
matrices \eqref{juw}. These $\mathbb
R$-deformations are given in the
following theorem, which is another
form of Theorem \ref{teo2}.

\begin{theorem}\label{teojy}
Let $A_{\text{\rm can}}$ be a canonical
matrix \eqref{juw} for *congruence, and
let ${\cal D}={\cal D}\langle
A_{\text{can}}\rangle $ be the
$(0,\!*,\!\circ,\!\bullet)$ matrix from
Definition \ref{kkj}. Then the simplest
$\mathbb R$-deformation $A_{\text{\rm
can}} +{\cal D}({x\cup y})$ from
Definition \ref{dhy} is miniversal.
\end{theorem}

\begin{lemma}\label{los}
Theorem \ref{teojy} follows from
Theorem \ref{teo2}.
\end{lemma}

\begin{proof}
Let Theorem \ref{teo2} hold; that is,
for $A_{\text{can}}$ there exist matrix
functions ${\cal S}:(\mathbb C^{n\times
n},0_n)\to (\mathbb C^{n\times n},I_n)$
and ${\it\Phi}: (\mathbb C^{n\times
n},0_n)\to ({\cal D}(\mathbb C),0_n)$
(with ${\cal D}$ from Definition
\ref{kkj}) that are analytic functions
of the entries of $X$ and $\bar X$ such
that
\begin{equation}\label{KKT}
{\cal
S}(X)^*
(A_{\text{can}}+X) {\cal S}(X)
=A_{\text{can}}+{\it\Phi}(X)
\end{equation}
for all $X\in\mathbb C^{n\times
n}$ in a neighborhood of $0_n$.

Let ${\cal B}:(\mathbb R^s,\underline
0) \to ({\mathbb C}^{n\times
n},A_{\text{can}})$ be an arbitrary
$\mathbb R$-deformation of
$A_{\text{can}}$. Then
\[{\it\Psi}:
(\mathbb R^s,\underline 0)\to (\mathbb
C^{n\times n},0_n),\qquad
{\it\Psi}(\underline z):={\cal
B}(\underline z)-A_{\text{can}}
\] is an analytic map
and so
\[
\hat{\it\Psi}
: (\mathbb
R^s,\underline 0)\to ((\mathbb
R^{n\times n},\mathbb
R^{n\times n}),0_n),\qquad
\hat{\it\Psi}(\underline z):=
(\re{\it\Psi}(\underline z),
\im{\it\Psi}(\underline z))
\]
is an analytic map too since
\[
\re{\it\Psi}(\underline z)=
\frac{{\it\Psi}(\underline z)+
\overline{{\it\Psi}(\underline z)}}2,
      \qquad
\im{\it\Psi}(\underline z)=
\frac{{\it\Psi}(\underline z)-
\overline{{\it\Psi}(\underline z)}}{2i}
\]
The map $\hat{\it\Psi}$ embeds ${\cal
B}$ into the universal $\mathbb
R$-deformation \eqref{edr} since ${\cal
B}(\underline z)=\mathcal
U(\hat{\it\Psi}(\underline
z))=A_{\text{can}} +
{\it\Psi}(\underline z)$. By
\eqref{KKT},
\[
{\cal
S}({\it\Psi}(\underline z))^*
(A_{\text{can}}+
{\it\Psi}(\underline z))
{\cal S}({\it\Psi}(\underline z))
=A_{\text{can}}+
{\it\Phi}
({\it\Psi}(\underline z))
\]
and so ${\cal B}$ is *congruent to an
$\mathbb R$-deformation that is
embedded into $A_{\text{can}}+{\cal
D}(x\cup y) $, which proves that
$A_{\text{can}}+{\cal D}(x\cup y) $ is
versal. It is miniversal since by
Theorem \ref{teo2} the number
$\dim_{\mathbb R}{\cal D}({\mathbb C})$
is the smallest that can be made by
transformations \eqref{tef};
$\dim_{\mathbb R} T( A_{\text{can}})$
in \eqref{jyr} is the number of
independent reducing parameters.
\end{proof}

\section{Construction of the
transforming matrix} \label{sect4}

In this section we prove that Theorem
\ref{teo2} holds if we take any
$(0,\!*,\!\circ,\!\bullet)$ matrix
${\cal D}$ satisfying \eqref{jyr}
instead of ${\cal D}={\cal D}\langle
A_{\text{can}}\rangle $ from Definition
\ref{kkj}; we also construct the
transforming matrix ${\cal S}(X)$. For
miniversal deformations of matrices
under congruence, transforming matrices
were constructed in a similar way by
the authors in \cite[Appendix A]{f_s}.
For miniversal deformations of matrices
under similarity and of matrix pencils,
transforming matrices (in the form of
Taylor series) were constructed and
their numerous applications were given
by Garcia-Planas and Mailybaev in
\cite{gar_mai,mai2000,mai2001}.

Let us fix  an $n\times n$ complex
matrix $A$ and a
$(0,\!*\!,\circ,\!\bullet)$ matrix
$\cal D$ of the same size such that
${\mathbb C}^{\,n\times n}=T(A) + {\cal
D}({\mathbb C})$ (we do not assume that
this sum is direct).

 For each
$M=[m_{kl}]\in\mathbb
      C^{n\times n}$, we write
$\|M\|:=\sqrt{\sum |m_{kl}|^2}$ and
\begin{equation}\label{ntu}
\|M\|_{\cal
D}:=\sqrt{\sum_{(k,l)\in{\cal I}_0}
|m_{kl}|^2+
\sum_{(k,l)\in {\cal I}_{\circ}}
\im (m_{kl})^2+
\sum_{(k,l)\in {\cal I}_{\bullet}}
\re (m_{kl})^2}
\end{equation}
(see \eqref{jpx}), in which
$|m_{kl}|$, $\im (m_{kl})$, and $\re
(m_{kl})$ are the modulus, imaginary
part, and real part of $m_{kl}$. Note
that
\begin{equation*}\label{lk}
\|\alpha M+\beta N\|\le
|\alpha |\,\|M\|+|\beta |\,\|N\|,\qquad
\|MN\|\le \|M\|\,\|N\|
\end{equation*}
for all $\alpha ,\beta \in\mathbb C$
and $M,N\in\mathbb C^{n\times n}$ (see
\cite[Section 5.6]{hor_John}), and
\begin{equation}\label{fue}
\|M+N\|_{\cal D}=\|M\|_{\cal D}\qquad
\text{if }\ N\in {\cal
D}({\mathbb C}).
\end{equation}

For each $n\times n$ matrix unit
$E_{kl}$, we fix
$P_{kl},Q_{kl}\in\mathbb C^{n\times n}$
such that
\begin{equation}\label{8}
\begin{matrix}
E_{kl}+P_{kl}^*A +AP_{kl}\in {\cal
D}({\mathbb C})\\
iE_{kl}+Q_{kl}^*A +AQ_{kl}\in {\cal
D}({\mathbb C})\end{matrix}
\end{equation}
($P_{kl}$ and $Q_{kl}$ exist because
${\mathbb C}^{\,n\times n}=T(A) + {\cal
D}({\mathbb C})$). We can and will take
\begin{equation}\label{jup}
\text{$P_{kl}:=0_n$ if $(k,l)\in {\cal I}_*\cup{\cal
I}_{\circ}$
\quad   and \quad
   $Q_{kl}:=0_n$ if
$(k,l)\in {\cal
I}_*\cup{\cal I}_{\bullet}$}
\end{equation}
since then $E_{kl}\in {\cal D}({\mathbb
C})$ and $iE_{kl}\in {\cal D}({\mathbb
C})$, respectively.
 Write
\begin{equation}\label{kux}
a:=\|A\|,\qquad
b:=\max_{k,l}(\|P_{kl}\|,\|Q_{kl}\|).
\end{equation}

For each $n\times n$ matrix  $X$, we
construct a sequence of $n\times n$
matrices
\begin{equation}\label{rtg}
M_1:=X,\ M_2,\ M_3,\dots
\end{equation}
in which $M_{t+1}$ is defined by
$M_t=[m_{kl}^{(t)}]$ as follows:
\begin{equation}\label{dei}
A+M_{t+1}:=(I_n+C_t)^*(A+M_t)(I_n+C_t)
\end{equation}
where
\begin{equation}\label{drj1}
C_t:=\sum_{k,l}\left(\re (m_{kl}^{(t)})P_{kl}
+\im (m_{kl}^{(t)})Q_{kl}\right).
\end{equation}

In this section we prove the following
theorem.

\begin{theorem}\label{ttft}
Given $A\in\mathbb C^{n\times n}$  and
a $(0,\!*\!,\circ,\!\bullet)$ matrix
$\cal D$ of the same size such that
${\mathbb C}^{\,n\times n}=T(A) + {\cal
D}({\mathbb C})$.

{\rm(a)} All matrices $A+X$ that are
sufficiently close to $A$ are
simultaneously reduced by a
transformation
\[
{\cal
S}(X)^* (A+X) {\cal
S}(X),\quad\begin{matrix}
\text{${\cal S}(X)$
is an analytic function of the entries}\\
\text{of $\re X$ and $\im X$, and
${\cal
S}(0_n)=I_n$,}
\end{matrix}
\]
to matrices from $A +{\cal D}(\mathbb
C)$.

{\rm(b)} The transforming matrix ${\cal
S}(X)$ can be constructed as follows.
Fix $\varepsilon\in \mathbb R $ such
that
\begin{equation}\label{eoj}
0<\varepsilon<\min\left(
\frac{1}{b(a+1)(b+2)},\frac{1}3\right)
\quad (\text{see \eqref{kux}})
\end{equation}
and define the neighborhood
\[
U:=\{X\in\mathbb C^{n\times n}\,|\,
\|X\|<\varepsilon^5\}
\]
of $0_n$. Then for each matrix $X\in
U$, the infinite product
\begin{equation}\label{gre}
(I_{n}+C_1)(I_{n}+C_2)(I_{n}+C_3)\cdots
\quad (\text{see \eqref{drj1}})
\end{equation}
converges to a nonsingular matrix
${\cal S}(X)$, which is an analytic
function of the entries of $\re X$ and
$\im X$, and
\begin{equation}\label{msu1}
A+D:={\cal
S}(X)^*(A+X){\cal S}(X)\in A+{\cal
D}(\mathbb C), \qquad \|D\|\le\varepsilon ^3.
\end{equation}
\end{theorem}

The proof of Theorem \ref{ttft} is
based on the following two lemmas.

\begin{lemma} \label{lem2z}
Let $\varepsilon\in\mathbb R$,
$0<\varepsilon <1/3$, and let a
sequence of positive real numbers
\begin{equation*}\label{21z}
\delta_1,\
\mu_1,\
\delta_2,\
\mu_2,\
\delta_3,\
\mu_3,\ \dots
\end{equation*}
be defined by induction:
\begin{equation*}\label{22z}
\delta_1=\mu_1:=\varepsilon^5,\qquad
\delta_{i+1}
:=\varepsilon^{-1}\delta_i\mu _i,\qquad
\mu_{i+1}:=\mu_i
+\varepsilon^{-1}\delta_i.
\end{equation*}
Then
\begin{equation}\label{23z}
\delta_{i}<\min(
\varepsilon ^{2i},\mu_i),\qquad
\mu_i<\varepsilon ^{3}\qquad\text{for all
$i=1,2,\dots$}
\end{equation}
\end{lemma}

\begin{proof}
Reasoning by induction, we assume that
the inequalities \eqref{23z} hold for
$i=1,\dots,t$. Then they hold for
$i=t+1$ since
\begin{align*}
&\delta_{t+1}
=\varepsilon^{-1}\delta_t \mu_t<
\varepsilon^{-1}\varepsilon ^{2t}
\varepsilon ^{3}
= \varepsilon ^{2(t+1)},
\\
&\delta_{t+1}
=\varepsilon^{-1}\delta_t\mu _t
<\varepsilon^{-1}\delta_t<
\varepsilon^{-1}\delta_t+ \mu_t
=\mu_{t+1}
\end{align*}
and
\begin{align*}
\mu_{t+1}&=\mu_t
+\varepsilon^{-1}\delta_t=
\mu_{t-1}+\varepsilon^{-1}\delta_{t-1}+
\varepsilon^{-1}\delta_t=\cdots=
\mu_{1}+\varepsilon^{-1}(\delta_{1}
+\delta_{2}+\dots+
\delta_{t})
      \\&<
\varepsilon ^{5}+\varepsilon^{-1}
(\varepsilon ^{5}
+\varepsilon^{-1} \varepsilon ^{5}\varepsilon ^{5}
+\varepsilon ^{6}+\varepsilon ^{8}
+\varepsilon ^{10}+\cdots)\\
& =\varepsilon ^{5}+\varepsilon ^{4}
+\varepsilon ^{8}+\varepsilon ^{5}
(1+\varepsilon ^{2}+\varepsilon ^{4}+\cdots)
=\varepsilon ^{5}+\varepsilon ^{4}
+\varepsilon ^{8}+\varepsilon ^{5}/(1
-\varepsilon ^{2})\\
&<\varepsilon ^{5}+\varepsilon ^{4}
+\varepsilon ^{8}+2\varepsilon ^{5}
<3\varepsilon ^{5}+\varepsilon ^{4}
+\varepsilon ^{8}
<\varepsilon ^{4}+ \varepsilon
^{4}+\varepsilon ^{8} <3\varepsilon ^4
<\varepsilon ^{3}.\tag*{\qedhere}
\end{align*}
\end{proof}

\begin{lemma} \label{lem1}
Let $\varepsilon \in\mathbb R$ satisfy
\eqref{eoj}. Then the matrices from
\eqref{rtg} and \eqref{drj1} satisfy
\begin{equation}\label{15}
\|M_i\|<\mu_i,\quad
\|M_i\|_{\cal
D}<\delta_i,
\quad \|C_i\|\le \delta_ib
<\varepsilon^{2i-1},
\qquad
i=1,2,\dots
\end{equation}
in which $\mu_i, \delta_i, b$ are
defined in Lemma \ref{lem2z} and in
\eqref{kux}.
\end{lemma}

\begin{proof}
First we prove that for each $i$ the
third inequality in \eqref{15} follows
from the second inequality. Let
$\|M_i\|_{\cal D}<\delta_i$.

If $(k,l)\in{\cal I}_0\cup{\cal
I}_{\bullet}$, then
$|\re(m_{kl}^{(i)})|<\delta_i$ by
\eqref{ntu}. If $(k,l)\in{\cal
I}_*\cup{\cal I}_{\circ}$, then
$P_{kl}=0$ by \eqref{jup}.

If $(k,l)\in{\cal I}_0\cup{\cal
I}_{\circ}$, then
$|\im(m_{kl}^{(i)})|<\delta_i$ by
\eqref{ntu}. If $(k,l)\in{\cal
I}_*\cup{\cal I}_{\bullet}$, then
$Q_{kl}=0$ by \eqref{jup}.

Using these assertions and
\eqref{drj1}, we get
\begin{align*}\nonumber
\|C_i\|&\le \sum_{k,l}\left(|\re
(m_{kl}^{(i)})|\|P_{kl}\| +|\im
(m_{kl}^{(i)})|\|Q_{kl}\|\right)\\&\le
\sum_{k,l}(\delta_i\|P_{kl}\|
+\delta_i\|Q_{kl}\|)
=
\delta_i b.
\end{align*}
By \eqref{23z}, $\delta_i<\varepsilon
^{2i}.$ By \eqref{eoj},
$\varepsilon<1/b.$ Therefore, $\delta_i
b<\varepsilon ^{2i}\varepsilon
^{-1}=\varepsilon^{2i-1}$, which gives
the third inequality in \eqref{15}.

Let us prove the first two inequalities
in \eqref{15}. They hold for $t=1$
since $M_1=E\in U$ implies
$\|M_1\|<\varepsilon^5=\delta_1=\mu_1$.
Reasoning by induction, suppose that
they hold for $i=t$ and prove them for
$i=t+1$. Due to \eqref{dei},
\begin{equation}\label{18de}
M_{t+1}=M_t+C_t^*(A+M_t)+
(A+M_t)C_t+C_t^*(A+M_t)C_t
\end{equation}
and we have
\begin{align*}
\|M_{t+1}\|
&\le\|M_t\|+\|C_t\|(\|A\|+\|M_t\|)(2+\|C_t\|)
   \\
&<\mu_t+\delta_t
b(a+\mu_t)(2+
\delta_t b)\quad
\text{(by \eqref{15} for $i=t$)}
    \\
&<
\mu_t+\delta_t
b(a+1)(2+b)
< \mu_t+\delta_t\varepsilon^{-1}\quad
\text{(by \eqref{23z} and \eqref{eoj})}
\\&=\mu_{t+1},
\end{align*}
which gives the first inequality in
\eqref{15} for $i=t+1$.

Due to \eqref{8} and \eqref{drj1},
\begin{gather*}
M_t+C_t^*A+AC_t=\sum_{k,l} \left(\re
(m_{kl}^{(t)})E_{kl}+\im
(m_{kl}^{(t)})iE_{kl}\right)
              \\
              +
\sum_{k,l}
\left(\re(m_{kl}^{(t)})P_{kl}^*+\im
(m_{kl}^{(t)})Q_{kl}^*\right)A
+\sum_{k,l}A\left(\re(m_{kl}^{(t)})
P_{kl}+\im (m_{kl}^{(t)})Q_{kl}\right)
\\=\sum_{k,l}\left(\re(m_{kl}^{(t)})
(E_{kl}+P_{kl}^*A +AP_{kl})+ \im
(m_{kl}^{(t)})( iE_{kl}+Q_{kl}^*A
+AQ_{kl})\right)\in {\cal D}({\mathbb
C})
\end{gather*}
and we obtain the second inequality in
\eqref{15} for $i=t+1$:
\begin{align*}
\|M_{t+1}\|_{\cal
D}&=\|C_t^*M_t+M_tC_t+C_t^*(A+M_t)C_t\|_{\cal
D}\quad \text{(by \eqref{fue} and
\eqref{18de})} \\
&\le\|C_t^*M_t+M_tC_t+C_t^*(A+M_t)C_t\| \\
&\le 2\|C_t\|\|M_t\|+
\|C_t\|^2(\|A\|+\|M_t\|)
   \\&\le
2\delta_t
b\mu_t+(\delta_t
b)^2(a+\mu_t)
    \\
&\le \delta_t \mu_t b (2+b(a+1))
\quad \text{(since $\delta_t <\mu _t$ by
\eqref{23z})}
     \\
&\le \delta_t \mu_t b(2+b)(a+1)
<\delta_t\mu_t\varepsilon^{-1}
=\delta_{t+1}.\tag*{\qedhere}
\end{align*}
\end{proof}

\begin{proof}[Proof of Theorem \ref{ttft}]
For each $i$, $\|C_i\|<\varepsilon
^{2i-1}<1$ by \eqref{15}, and so
$I_n+C_i$ is nonsingular by
\cite[Corollary 5.6.16]{hor_John}. We
have
\begin{equation}\label{ool}
\|C_i\|<c_i,\qquad c_i:=\varepsilon ^{2i-2}.
\end{equation}

The series with nonnegative terms
$c_1+c_2+\cdots$ is convergent since
\[
c_1+c_2+c_3+\cdots
=1
+\varepsilon^2+\varepsilon^4+\cdots
=1 /(1-\varepsilon ^{2})
< 1/(1-3^{-2}).
\]
This implies the convergence of the
infinite product
\begin{equation}\label{ghe}
c:=(1+c_1)(1+c_2)\cdots
\end{equation}
due to \eqref{ool} and \cite[Theorem
15.14]{mark}. This also implies the
convergence of infinite product
\eqref{gre} to some nonsingular matrix
${\cal S}(X)$ due to \cite[Theorems 2
and 4]{inf}.

Let us prove that ${\cal S}(X)$ on the
neighborhood $U$ is an analytic
function of the entries of $\re X$ and
$\im X$. To this end, we first prove
that
\[{\cal
S}_t(X)=[s_{kl}^{(t)}]:=(I_{n}+C_1)(I_{n}+C_2)\cdots
(I_{n}+C_t)\to {\cal S}(X)\] uniformly on $U$ as
$t\to\infty$. Since each entry
$s_{kl}^{(t)}$ is represented in the
form
\[s_{kl}^{(1)}+(s_{kl}^{(2)}
-s_{kl}^{(1)})+(s_{kl}^{(3)}
-s_{kl}^{(2)})+\dots+(s_{kl}^{(t)}
-s_{kl}^{(t-1)}),\] it suffices to prove
that the series
\begin{equation}\label{dmi}
s_{kl}^{(1)}+(s_{kl}^{(2)}
-s_{kl}^{(1)})+(s_{kl}^{(3)}
-s_{kl}^{(2)})+\dots+(s_{kl}^{(t)}
-s_{kl}^{(t-1)})+\cdots
\end{equation}
converges uniformly on $U$. The latter
follows from the Weierstrass M-test
\cite[Theorem 7.10]{rud} since the
convergent series of positive constants
$c(c_1+c_2+c_3+\cdots)$ (see
\eqref{ghe}) is a majorant of
\eqref{dmi}:
\begin{align*}
|s_{kl}^{(1)}|<&\|{\cal
S}_{1}(X)\| =\|I_{n}+C_1\|\le
1+c_1<cc_1,\\
|s_{kl}^{(t)}-s_{kl}^{(t-1)}|<&
\|{\cal S}_{t}(X)-{\cal S}_{t-1}(X)\|
=\|{\cal S}_{t-1}(X)C_{t}\|
     \\
&\qquad\le(1+c_1)\cdots(1+c_{t-1})
c_{t}<cc_{t}.
\end{align*}

Each entry of $\re C_t$ and $\im C_t$
for every $t$  is a polynomial function
of the entries of $\re X$ and $\im X$,
which is easily proved by induction on
$t$ using \eqref{rtg}--\eqref{drj1}.
Hence, each entry of ${\cal S}_t(X)$ is
a polynomial function of the entries of
$\re X$ and $\im X$. Since ${\cal
S}_t(X)\to {\cal S}(X)$, the
Weierstrass theorem on uniformly
convergent sequences of analytic
functions \cite[Theorem 7.12]{nar}
ensures that the entries of ${\cal
S}(X)$ are analytic functions of the
entries of $\re X$ and $\im X$.

The inclusion in \eqref{msu1} holds
since by \eqref{dei} \[A+M_{t}={\cal
S}_{t-1}(X)^*(A+X){\cal S}_{t-1}(X)\to
{\cal S}(X)^*(A+X){\cal S}(X)\] and
$\|M_t\|_{\cal D} <\delta_{t}\to 0$ as
$t\to\infty$. The inequality in
\eqref{msu1} holds because $M _t\to D$
and $ \|M_t\|< \mu_t<\varepsilon ^3.$
\end{proof}

\section{Proof of Theorem
\ref{teo2}}\label{kku}

In this section we prove that ${\mathbb
C}^{\,n\times n}=T(A_{\text{can}})
\oplus_{\mathbb R} {\cal D}({\mathbb
C}) $ for the
$(0,\!*,\!\circ,\!\bullet)$ matrix
${\cal D}$ from Definition \ref{kkj},
which ensures Theorem \ref{teo2} due to
Theorem \ref{ttft}(a).

For each $M\in {\mathbb C}^{m\times m}$
and $N\in {\mathbb C}^{n\times n}$,
define the real vector space of matrix
pairs
\begin{equation*}\label{neh1}
 T(M,N):=\{( S^*M+NR,\:
R^*N+MS)\,|\,S\in
 {\mathbb C}^{m\times n},\ R\in
 {\mathbb C}^{n\times m}\}.
\end{equation*}

\begin{lemma}\label{thekd}
Let
\begin{equation}\label{ftu}
A=A_1\oplus\dots\oplus A_t\in
\mathbb C^{n\times n}
\end{equation}
be a block-diagonal matrix in which
every $A_i$ is $n_i\times n_i$. Let
${\cal D}=[{\cal D}_{ij}]$ be a
$(0,\!*,\!\circ,\!\bullet)$ matrix of
the same size that is partitioned into
blocks conformally to the partition of
$A$. Then ${\mathbb C}^{\,n\times
n}=T(A)\oplus_{\mathbb R} {\cal
D}({\mathbb C}) $ if and only if
\begin{itemize}
  \item[\rm(i)] for each
      $i=1,\dots,t$, every
      $n_i\times n_i$ matrix can be
      reduced to exactly one matrix
      from ${\cal D}_{ii}(\mathbb
      C)$ by adding matrices from
      $T(A_i)$, and

  \item[\rm(ii)] for each
      $i,j=1,\dots,t$, $i<j$, every
      pair of $n_j\times n_i$ and
      $n_i\times n_j$ matrices can
      be reduced to exactly one
      matrix pair from $({\cal
      D}_{ji}(\mathbb C),{\cal
      D}_{ij}(\mathbb C))$ by
      adding matrix pairs from
      $T(A_i,A_j)$.
\end{itemize}
\end{lemma}

\begin{proof}
Clearly, ${\mathbb C}^{\,n\times
n}=T(A)\oplus_{\mathbb R} {\cal
D}({\mathbb C}) $ if and only if for
each $C\in{\mathbb C}^{n\times n}$ the
set $C+T(A)$ contains exactly one
matrix $D$ from ${\cal D}(\mathbb C)$;
i.e., there is exactly one
\begin{equation}\label{kid}
D=C+S^*A+AS\in{\cal
D}(\mathbb C)\qquad
\text{with
$S\in{\mathbb
C}^{n\times n}$.}
\end{equation}
Partition $D,\ C$, and $S$ into $t^2$
blocks conformally to the partition
\eqref{ftu} of $A$. By \eqref{kid}, for
each $i$ we have
$D_{ii}=C_{ii}+S_{ii}^*A_{i}
+A_{i}S_{ii}$, and for each $i,j$ such
that $i<j$ we have
\begin{equation*}\label{mht}
\begin{bmatrix}
D_{ii}&D_{ij}
 \\ D_{ji}&D_{jj}
\end{bmatrix}
=
\begin{bmatrix}
C_{ii}&C_{ij}
 \\ C_{ji}&C_{jj}
\end{bmatrix}
+ \begin{bmatrix}
S_{ii}^*&S_{ji}^*
 \\ S_{ij}^*&S_{jj}^*
\end{bmatrix}
\begin{bmatrix}
A_i&0
 \\ 0& A_j
\end{bmatrix}
+
\begin{bmatrix}
A_i&0
 \\ 0& A_j
\end{bmatrix}
\begin{bmatrix}
S_{ii}&S_{ij}
 \\ S_{ji}&S_{jj}
\end{bmatrix}.
\end{equation*}
Thus, \eqref{kid} can be rewritten in
the form
\begin{align*}
D_{ii}&=C_{ii}
+S_{ii}^*A_i+A_iS_{ii}\in{\cal
D}_{ii}(\mathbb
C)\\
(D_{ji},D_{ij})&= (C_{ji},C_{ij})
+(S_{ij}^*A_i+A_jS_{ji},\:
S_{ji}^*A_j+A_iS_{ij}) \in {\cal
D}_{ji}(\mathbb C)\oplus {\cal
D}_{ij}(\mathbb C)
\end{align*}
for all $i=1,\dots,t$ and
$j=i+1,\dots,t$.
\end{proof}

\begin{corollary}\label{cor}
It suffices to prove Theorem \ref{teo2}
for those $A_{\text{\rm can}}$ in
\eqref{juw} that have at most two
direct summands; i.e., it suffices to
prove that
\begin{itemize}
  \item[\rm(i)] for each
      $(0,\!*,\!,\circ,\!\bullet)$
      matrix ${\cal D}:={\cal
      D}\langle M\rangle$ from
      Definition \ref{kkj}(i)
      $($i.e., $M$ is
      $H_{2m}(\lambda)$, $\mu
      \Delta_n$, or $J_n(0))$,
      every matrix of the same size
      as $\cal D$ can be reduced to
      exactly one matrix from
      ${\cal D}(\mathbb C)$ by
      adding matrices from $T(M)$,
      and

  \item[\rm(ii)] for each pair of
      $(0,\!*,\!,\circ,\!\bullet)$
      matrices $({\cal D}_1,{\cal
      D}_2):=({\cal D}\langle
      M\rangle ,{\cal D}\langle
      N\rangle )$ from Definition
      \ref{kkj}(ii), every matrix
      pair of the
 same size as $({\cal D}_1,{\cal
      D}_2)$
can be reduced to exactly one
      matrix pair from $({\cal
      D}_1(\mathbb C),{\cal
      D}_2(\mathbb C))$ by adding
      matrix pairs from $T( M,N)$.
\end{itemize}
\end{corollary}

In the rest of the paper we prove that
the assertions (i) and (ii) of
Corollary \ref{cor} hold.

\subsection{Diagonal blocks of
${\cal D}\langle A_{\text{\rm
can}}\rangle $}

In this section we prove the assertion
(i) of Corollary \ref{cor}.

\subsubsection{Blocks ${\cal
D}\langle H_{2n}(\lambda)\rangle $ with
$|\lambda|>1$} \label{sub2}

According to Corollary \ref{cor}(i), we
need to prove that each $2n\times 2n$
matrix $A=[A_{ij}]_{i,j=1}^2$ can be
reduced to exactly one matrix of the
form \eqref{KEV} by adding
\begin{multline*}\label{moh}
\begin{bmatrix}
S_{11}^*&S_{21}^*
 \\ S_{12}^*&S_{22}^*
\end{bmatrix}
\begin{bmatrix}
0&I_n
 \\ J_n(\lambda)&0
\end{bmatrix}
+\begin{bmatrix} 0&I_n
 \\ J_n(\lambda)&0
\end{bmatrix}
\begin{bmatrix}
S_{11}&S_{12}
 \\ S_{21}&S_{22}
\end{bmatrix}
    \\=
\begin{bmatrix}
S_{21}^* J_n(\lambda)+S_{21}&
S_{11}^*+S_{22}\\
S_{22}^* J_n(\lambda)+J_n(\lambda)
S_{11}& S_{12}^*+J_n(\lambda)S_{12}
\end{bmatrix}
\end{multline*}
in which $S=[S_{ij}]_{i,j=1}^2$ is an
arbitrary $2n\times 2n$ matrix. Taking
$S_{22}=-A_{12}$ and all other
$S_{ij}=0$, we obtain a new matrix $A$
with $A_{12}=0$. To preserve $A_{12}$,
we must hereafter take $S$ with
$S_{11}^*+S_{22}=0$. Therefore, we can
add $S_{21}^* J_n(\lambda)+S_{21}$ to
(the new) $A_{11}$,
$S_{12}^*+J_n(\lambda)S_{12}$ to
$A_{22}$, and $-S_{11}
J_n(\lambda)+J_n(\lambda) S_{11}$ to
$A_{21}$. Using these additions we can
reduce $A$ to the form \eqref{KEV} due
to the following 3 lemmas.

\begin{lemma}\label{lem2aaaa}
By adding $SJ_n(\lambda)+S^*$, in which
$\lambda$ is a fixed complex number,
$|\lambda |\ne 1$, and $S$ is
arbitrary, we can reduce each $n\times
n$ matrix to $0$.
\end{lemma}

\begin{proof}
Let $A=[a_{ij}]$ be an arbitrary
$n\times n$ matrix. We will reduce it
along its \emph{skew diagonals}
starting from the upper left hand
corner; that is, along
\begin{equation*}\label{ly}
a_{11},\
(a_{21},a_{12}),\
(a_{31},a_{22},a_{13}),\
\dots,\ a_{nn},
\end{equation*}
by adding $\varDelta
A:=SJ_n(\lambda)+S^*$ in which
$S=[s_{ij}]$ is any $n\times n$ matrix.
For instance, if $n=4$ then $\varDelta
A$ is
\[
\begin{bmatrix}
 \lambda s_{11}+0+\bar s_{11} &
 \lambda s_{12}+s_{11}+\bar s_{21} &
 \lambda s_{13}+s_{12}+\bar s_{31} &
 \lambda s_{14}+s_{13}+\bar s_{41}
            \\
\lambda s_{21}+0+\bar s_{12}
&
 \lambda s_{22}+s_{21}+\bar s_{22} &
 \lambda s_{23}+s_{22}+\bar s_{32} &
 \lambda s_{24}+s_{23}+\bar s_{42}
             \\
\lambda s_{31}+0+\bar s_{13}
&
\lambda s_{32}+s_{31}+\bar s_{23} &
\lambda  s_{33}+s_{32}+\bar s_{33} &
\lambda s_{34}+s_{33}+\bar s_{43}
  \\
\lambda s_{41}+0+\bar s_{14}
&
\lambda  s_{42}+s_{41}+\bar s_{24} &
\lambda  s_{43}+s_{42}+\bar s_{34} &
\lambda   s_{44}+s_{43}+\bar s_{44}
   \end{bmatrix}.
\]

We reduce $A$ to $0$ by induction.
Assume that the first $t-1$ skew
diagonals of $A$ are zero. To preserve
them, we take the first $t-1$ skew
diagonals of $S$ equalling zero. If the
$t{\text{\rm th}}$ skew diagonal of $S$
is $(x_1,\dots,x_r)$, then we can add
\begin{equation}\label{mugh}
(\lambda x_{1}+\bar x_r,\
\lambda
x_{2}+\bar x_{r-1},\
\lambda
x_{3}+\bar x_{r-2},\ \dots,\ \lambda
x_{r}+\bar x_1)
\end{equation}
to the $t{\text{\rm th}}$ skew diagonal
of $A$. Let us show that each vector
$(c_1,\dots,c_r)\in\mathbb C^{r}$ is
represented in the form \eqref{mugh};
that is, the corresponding system of
linear equations
\begin{equation}\label{kes}
\lambda
x_{1}+\bar x_r=c_1,\ \dots,\ \lambda x_j+
\bar x_{r-j+1}=c_j,\
\dots,\  \lambda
x_{r}+\bar x_1 =c_r
\end{equation}
has a solution. This is clear if
$\lambda =0$. Suppose that $\lambda \ne
0$.

By \eqref{kes}, $x_j=\lambda
^{-1}(c_j-\bar x_{r-j+1})$. Replace $j$
by $r-j+1$:
\begin{equation}\label{fsw}
x_{r-j+1}=\lambda
^{-1}(c_{r-j+1}-\bar x_j).
\end{equation}

Consider only the case $r=2k+1$ (the
case $r=2k$ is considered analogously).
Substituting \eqref{fsw} into the first
$k+1$ equations of \eqref{kes}, we
obtain
\begin{equation*}\label{jdi}
 \lambda x_j+\bar\lambda ^{-1}(\bar c_{r-j+1}-
x_{j})=(\lambda -\bar\lambda ^{-1})x_j+
\bar\lambda ^{-1}\bar c_{r-j+1}=c_j,\qquad j=1,\dots,k+1.
 \end{equation*}
Since $|\lambda|\ne 1$, $\lambda
-\bar\lambda ^{-1}\ne 0$ and we have
\begin{equation}\label{j,f}
x_j
=\frac{c_j-\bar\lambda ^{-1}\bar c_{r-j+1}}
{\lambda
-\bar\lambda ^{-1}}=
\frac{\bar\lambda c_j-\bar c_{r-j+1}}
{\lambda\bar\lambda
-1}
,\qquad j=1,\dots,k+1.
 \end{equation}
The equalities \eqref{fsw} and
\eqref{j,f} give the solution of
\eqref{kes}.
\end{proof}

\begin{lemma}\label{lem2a}
By adding $J_n(\lambda)R+R^*$, in which
$\lambda$ is a fixed complex number,
$|\lambda |\ne 1$, and $R$ is
arbitrary, we can reduce each $n\times
n$ matrix to $0$.
\end{lemma}

\begin{proof}
By Lemma \ref{lem2aaaa}, for each
$n\times n$ matrix $B$ there exists $S$
such that $B+SJ_n(\bar\lambda)+S^*=0$.
Then $B^*+J_n(\bar\lambda)^*S^*+S=0. $
Since
\[
ZJ_n(\bar\lambda)^*Z=J_n(\lambda),\qquad
Z:= \begin{bmatrix}
 0&&1
 \\ &\udots&\\
 1&&0
 \end{bmatrix},
\]
we have
\[
ZB^*Z+J_n(\lambda)(ZSZ)^*+ZSZ=0.
\]
This implies Lemma \ref{lem2a} since
$ZB^*Z$ is arbitrary.
\end{proof}

\begin{lemma}[{\cite[Lemma 5.5]{f_s}}]\label{lem3a}
By adding $S J_n(\lambda)-J_n(\lambda)
S$, we can reduce each $n\times n$
matrix to exactly one matrix of the
form $0^{\swarrow}$.
\end{lemma}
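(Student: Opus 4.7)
The plan is to analyze the operator $\phi(S) := SJ_n(\lambda) + J_n(\lambda)S$ via its action on the superdiagonal filtration of $\mathbb{C}^{n \times n}$, mirroring the skew-diagonal reduction used for Lemmas \ref{lem2aaaa} and \ref{lem2a}. Writing $J_n(\lambda) = \lambda I + N$ with $N$ the standard nilpotent superdiagonal matrix, one obtains
\[
\phi(S) = 2\lambda S + NS + SN, \qquad \phi(S)_{ij} = 2\lambda\, S_{ij} + S_{i+1, j} + S_{i, j-1}
\]
(with out-of-range indices read as zero). Both shift contributions $S_{i+1, j}$ and $S_{i, j-1}$ live on the superdiagonal one step below that of $S_{ij}$ (in the filtration by $j - i$), so $\phi$ has a triangular structure with the scalar $2\lambda$ on the diagonal and the nilpotent operator $L_N + R_N$ on the strictly lower part.

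Since $|\lambda| > 1$ forces $2\lambda \ne 0$, the triangular structure reduces the problem to a superdiagonal-by-superdiagonal solve. I would process $A = [a_{ij}]$ one superdiagonal at a time; on each, the diagonal scalar $2\lambda$ makes the linear system for the free entries of $S$ uniquely solvable and allows cancellation of all entries of $A$ on that superdiagonal \emph{except those at boundary positions where the corresponding shift is unavailable}---namely $(i, 1)$ (where $S_{i, j-1}$ vanishes because $j = 1$) and $(n, j)$ (where $S_{i+1, j}$ vanishes because $i = n$). These surviving positions form precisely the first column and bottom row, producing the pattern $0^{\swarrow}$.

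Uniqueness of the reduced form in $0^{\swarrow}(\mathbb{C})$ would follow from a parallel triangular analysis of the intersection $\phi(\mathbb{C}^{n \times n}) \cap 0^{\swarrow}(\mathbb{C})$. If $D = \phi(S)$ lies in $0^{\swarrow}(\mathbb{C})$, then the vanishing of $D_{ij}$ at positions outside the first column gives the recursion $2\lambda\, S_{ij} = -S_{i+1, j} - S_{i, j-1}$; iterating this from the upper-right corner $(1, n)$ inward forces $S$ to vanish identically, and hence $D = 0$. Together with the previous paragraph, this yields the direct-sum decomposition $\mathbb{C}^{n \times n} = \phi(\mathbb{C}^{n\times n}) \oplus 0^{\swarrow}(\mathbb{C})$ required by Lemma \ref{thekd}(i).

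The main obstacle I anticipate is the combinatorial bookkeeping needed to pin the obstruction pattern to \emph{exactly} the single-column (or single-row) form of $0^{\swarrow}$, and not to a thicker boundary band or vanishing pattern. This is parallel in spirit to the system analysis \eqref{kes}--\eqref{j,f} of Lemma \ref{lem2aaaa}, but the per-superdiagonal system here is purely $\mathbb{C}$-linear (with no conjugate-linear coupling), and the diagonal coefficient $2\lambda$ alone drives the uniqueness of the solve; the algebra is therefore simpler than in the conjugate-linear Lemmas \ref{lem2aaaa}--\ref{lem2a}, though the verification that both admissible descriptions of $0^{\swarrow}$ (left column versus bottom row) yield the same $n$-dimensional complement to $\phi(\mathbb{C}^{n \times n})$ still requires a small boundary accounting.
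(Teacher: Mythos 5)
Your proposal contains a genuine error, and the source of it is worth flagging: the lemma as printed has a sign typo. The operator that actually arises in Subsection \ref{sub2} is a commutator. Going back to \eqref{moh}, preserving the $(1,2)$ block forces $S_{22}=-S_{11}^*$, so the quantity added to $A_{21}$ is $S_{22}^*J_n(\lambda)+J_n(\lambda)S_{11}=J_n(\lambda)S_{11}-S_{11}J_n(\lambda)$. The paper's own proof of Lemma~\ref{lem3a} accordingly opens by computing $SJ_n(\lambda)-J_n(\lambda)S=SJ_n(0)-J_n(0)S$, in which the $\lambda I$ parts cancel. You took the printed ``$+$'' at face value, and this leads the whole argument astray.

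With the ``$+$'' sign the map $\phi(S)=SJ_n(\lambda)+J_n(\lambda)S=2\lambda S+NS+SN$ (where $N=J_n(0)$) is a Sylvester operator with spectrum $\{2\lambda\}$, and $2\lambda\ne 0$ since $|\lambda|>1$. So $\phi$ is a bijection of $\mathbb{C}^{n\times n}$, every matrix can be reduced to $0$, and the only possible complement to $\phi(\mathbb{C}^{n\times n})$ is $\{0\}$, not the $n$-dimensional space $0^{\swarrow}(\mathbb{C})$. Your existence step contains the error that masks this: you assert that the positions $(i,1)$ and $(n,j)$ cannot be cancelled ``because the corresponding shift is unavailable,'' but the diagonal term $2\lambda S_{ij}$ alone cancels them; the shift terms are not what does the cancelling. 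Your uniqueness paragraph, pushed to its conclusion, is in fact a proof that $\phi$ is injective (hence surjective on the finite-dimensional space $\mathbb{C}^{n\times n}$) --- correct for the ``$+$'' operator, but contradicting the existence of a nonzero complement. You even write that ``the diagonal coefficient $2\lambda$ alone drives the uniqueness of the solve,'' which is precisely the observation that should have made you suspect the statement as printed.

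With the correct ``$-$'' sign, $\phi(S)=SN-NS$ has no diagonal term: $\phi(S)_{ij}=S_{i,j-1}-S_{i+1,j}$ strictly raises the grading $j-i$ by one, so $\phi$ is nilpotent with an $n$-dimensional cokernel (the centraliser picture for $J_n(0)$), and the superdiagonal sweep you propose is exactly the right tool. That is what the paper does: it writes out $SJ_n(0)-J_n(0)S$ explicitly and reduces along $a_{n1},(a_{n-1,1},a_{n2}),\dots,a_{1n}$ to the form $0^{\swarrow}$. So the framework you chose is sound; redo the bookkeeping for the commutator, where the obstructions genuinely sit along one boundary row or column.
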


\begin{proof}
Let $A=[a_{ij}]$ be an arbitrary
$n\times n$ matrix. Adding
\begin{multline*}
SJ_n(\lambda)-J_n(\lambda)S
=SJ_n(0)-J_n(0)S \\=
\begin{bmatrix}
 s_{21}-0&s_{22}-s_{11}&s_{23}-s_{12}
 &\dots&s_{2n}-s_{1,n-1}
 \\ \hdotsfor{5}\\
 s_{n-1,1}-0&s_{n-1,2}-s_{n-2,1}
 &s_{n-1,3}-s_{n-2,2}
 &\dots&s_{n-1,n}-s_{n-2,n-1}\\
 s_{n1}-0&s_{n2}-s_{n-1,1}&s_{n3}-s_{n-1,2}
 &\dots&s_{nn}-s_{n-1,n-1}
 \\ 0-0&0-s_{n1}&0-s_{n2}&\dots&0-s_{n,n-1}
 \end{bmatrix}
\end{multline*}
we reduce $A$ to the form
$0^{\swarrow}$ along the diagonals
\begin{equation*}\label{lyj}
a_{n1},\
(a_{n-1,1},a_{n2}),\
(a_{n-2,1},a_{n-1,2},
a_{n3}),\ \dots,\ a_{1n}.\tag*{\qedhere}
\end{equation*}
\end{proof}

\subsubsection{Blocks ${\cal D}
\langle \mu \Delta_n\rangle $ with
$|\mu|=1$} \label{sub6}

According to Corollary \ref{cor}(i), we
need to prove that each $n\times n$
matrix $A=[a_{ij}]$ can be reduced to
exactly one matrix of the form $
0^{\sespoon}$ if $\mu\notin\mathbb R$
or $0^{\sefilledspoon}$ if $\mu\notin i
\mathbb R$ by adding
\begin{equation}\label{lim}
\varDelta A:=\mu
(S^*\Delta_n+\Delta_n S)
\end{equation}
in which $S=[s_{ij}]$ is any $n\times
n$ matrix.

For example, if $n=4$, then $\varDelta
A$ is
\[
\mu\left[
\begin{array}{llll}
\bar s_{41}+s_{41}+i(0+0)
&\bar s_{31}+s_{42}+
i(\bar s_{41}+0)
&\dots
&\bar s_{11}+s_{44}+
i(\bar s_{21}+0)
\\
\bar s_{42}+s_{31}+i(0+s_{41})
&\bar s_{32}+s_{32}+
i(\bar s_{42}+s_{42})
&\dots
&\bar s_{12}+s_{34}+
i(\bar s_{22}+s_{44})
\\
\bar s_{43}+s_{21}+i(0+s_{31})
&\bar s_{33}+s_{22}+
i(\bar s_{43}+s_{32})
&\dots
&\bar s_{13}+s_{24}+
i(\bar s_{23}+s_{34})
\\
\bar s_{44}+s_{11}+i(0+s_{21})
&\bar s_{34}+s_{12}+
i(\bar s_{44}+s_{22})
&\dots
&\bar s_{14}+s_{14}+
i(\bar s_{24}+s_{24})
\end{array}
\right]
\]

Write
\begin{equation}\label{feq}
s_{n+1,j}:=0\qquad \text{for $j=1,\dots,n$}
\end{equation}
and define $\delta _{ij}$ via
$\varDelta A=\mu[\delta _{ij}]$. Then
\begin{equation}\label{dsw}
\delta _{ij}=\bar s_{n+1-j,i}+s_{n+1-i,j}
+i(\bar s_{n+2-j,i}+s_{n+2-i,j}).
\end{equation}

\emph{Step 1: Let us prove that
\begin{equation}\label{dfg}
\exists S:\quad A+\varDelta A \text{ is a
diagonal matrix.}
\end{equation}}
Let $A=\mu[a_{ij}]$. We need to prove
that the system of equations
\begin{equation}\label{jem}
\delta_{ij}=-a_{ij},\qquad
i,j=1,\dots,n,\quad i\ne j
\end{equation}
with unknowns $s_{ij}$ is consistent
for all $a_{ij}$.

Since
\begin{equation*}\label{hiw}
\bar\delta _{ji}=\bar s_{n+1-j,i}+s_{n+1-i,j}
-i(\bar s_{n+2-j,i}+s_{n+2-i,j})=-\bar a_{ji}
\end{equation*}
we have
\begin{gather*}
(\delta _{ij}+\bar \delta _{ji})/2=
\bar s_{n+1-j,i}+s_{n+1-i,j}
=(-a_{ij}-\bar a_{ji})/2
\\
(\delta _{ij}-\bar \delta _{ji})/(2i)=
\bar s_{n+2-j,i}+s_{n+2-i,j}
=(-a_{ij}+\bar a_{ji})/(2i)
\end{gather*}
Thus, the system of equations
\eqref{jem} is equivalent to the system
\begin{equation}\label{vik}
\begin{matrix}
\bar
s_{n+1-j,i}+s_{n+1-i,j}=b_{ij}\\
\bar
s_{n+2-j,i}+s_{n+2-i,j}=c_{ij}\\
\end{matrix}
\qquad
i,j=1,\dots,n,\quad i<j
\end{equation}
in which $ b_{ij}:=(-a_{ij}-\bar
a_{ji})/2$ and $c_{ij}:=(-a_{ij}+\bar
a_{ji})/(2i).$

For $k,l=1,\dots,n$, write
\begin{equation}\label{zir}
u_{kl}:=
  \begin{cases}
    -s_{kl} & \text{if } k+l\ge n+2 \\
    \bar s_{kl} & \text{if } k+l\le n+1
  \end{cases}
\end{equation}
Then the system \eqref{vik} takes the
form
\begin{equation}\label{vikq}
\begin{matrix}
u_{n+1-j,i}-u_{n+1-i,j}=b_{ij}\\
u_{n+2-j,i}-u_{n+2-i,j}=c_{ij}\\
\end{matrix}
\qquad
i,j=1,\dots,n,\quad i<j.
\end{equation}
This system is partitioned into
subsystems with unknowns $u_{\alpha
,\beta}$, $\alpha -\beta
=\text{const}$. Each of these
subsystems has the form
\begin{equation}\label{ddo}
\begin{aligned}
&u_{1,p+1}=u_{n-p,n}+b_{p+1,n}
    &&
u_{2,p+2}=u_{n-p,n}+c_{p+2,n}
    \\
&u_{2,p+2}=u_{n-p-1,n-1}+b_{p+2,n-1}
 \quad&&
u_{3,p+3}=u_{n-p-1,n-1}+c_{p+3,n-1}
  \\
&u_{3,p+3}=u_{n-p-2,n-2}+b_{p+3,n-2}
  &&
u_{4,p+4}=u_{n-p-2,n-2}+c_{p+4,n-2}
  \\
&\dots\dots\dots\dots\dots\dots\dots\dots\dots
    &&
\dots\dots\dots\dots\dots\dots\dots\dots\dots
\end{aligned}
\end{equation}
given by $p\in\{0,1,\dots,n-2\}$, or
\begin{equation}\label{ddo1}
\begin{aligned}
&u_{p+1,1}=u_{n,n-p}+b_{1,n-p}
&&
u_{p+2,2}=u_{n,n-p}+c_{2,n-p}
\\
&u_{p+2,2}=u_{n-1,n-p-1}+b_{2,n-p-1}
\quad&&
u_{p+3,3}=u_{n-1,n-p-1}+c_{3,n-p-1}
\\
&u_{p+3,3}=u_{n-2,n-p-2}+b_{3,n-p-2}
&&
u_{p+4,4}=u_{n-2,n-p-2}+c_{4,n-p-2}
\\
&\dots\dots\dots\dots\dots\dots\dots\dots\dots
    &&
\dots\dots\dots\dots\dots\dots\dots\dots\dots
\end{aligned}
\end{equation}
given by $p\in\{1,2,\dots,n-2\}$. In
each of subsystems, all equations have
the form
$u_{\dots,i}=u_{\dots,j}+\cdots$ in
which $i<j$ (see \eqref{vik});
therefore, the set of unknowns in the
left-hand sides of equations does not
intersect with the set of unknowns in
their right-hand sides. Thus, all
subsystems are consistent. This proves
\eqref{dfg}.
\medskip

\emph{Step 2: Let us prove that for
each diagonal matrix $A$
\begin{equation}\label{dfg1}
\exists S:\quad A+\varDelta A
\text{ has the form $ 0^{\sespoon}$
if $\mu\notin\mathbb R$ or
$0^{\sefilledspoon}$ if $\mu\notin i\mathbb
R$}
\end{equation}
in which $\varDelta A$ is defined in
\eqref{lim}.}

Since $A$, $0^{\sespoon}$, and
$0^{\sefilledspoon}$ are diagonal
matrices, the matrix $\varDelta A$ must
be diagonal too. Thus, the entries of
$S$ must satisfy the system \eqref{jem}
with $a_{ij}=0$. Reasoning as in Step
1, we obtain the system \eqref{vikq}
with $b_{kl}=c_{kl}=0$, which is
partitioned into subsystems \eqref{ddo}
of the form
\begin{equation}\label{fub}
u_{1,p+1}=u_{2,p+2}=\dots=u_{n-p,n}
\qquad(p\ge 0)
\end{equation}
and subsystems \eqref{ddo1} of the form
\begin{equation}\label{fub1}
u_{p+1,1}=u_{p+2,2}=\dots=u_{n,n-p}
=u_{n+1,n-p+1}=0
\quad(p\ge 1;\text{ see \eqref{feq}).}
\end{equation}
By \eqref{zir} and \eqref{fub1}, $S$ is
upper triangular. By \eqref{fub},
\[
\bar s_{1,p+1}=\dots=\bar s_{z,p+z}
=-s_{z+1,p+z+1}=
\dots=
-s_{n-p,n}
\]
in which $z$ is the integer part of
$(n+1-p)/2$ and $p=0,1,\dots,n-2$.

Let $n=2m$ or $n=2m+1$. By \eqref{dsw},
the first $m$ entries of the main
diagonal of $\mu^{-1}\varDelta A$ are
\[
\begin{matrix}
\delta _{11}=\bar
s_{n1}+s_{n1}\\\delta _{22}=
\bar s_{n-1,2}+s_{n-1,2}
+i(\bar s_{n2}+s_{n2})\\
\hdotsfor{1}\\ \delta _{mm}= \bar
s_{n+1-m,m}+s_{n+1-m,m} +i(\bar
s_{n+2-m,m}+s_{n+2-m,m})
\end{matrix}
\]
 They are zero and so we cannot change
the first $m$ diagonal entries of $A$.

The last $m$  entries of the main
diagonal of $\mu^{-1}\varDelta A$ are
\[
\begin{matrix}
\delta _{n-m+1,n-m+1}=\bar
s_{m,n-m+1}+s_{m,n-m+1} +i(\bar
s_{m+1,n-m+1}+s_{m+1,n-m+1})
\\\hdotsfor{1}\\\delta _{n-1,n-1}=
\bar s_{2,n-1}+s_{2,n-1} +i(\bar
s_{3,n-1}+s_{3,n-1})\\ \delta _{nn}= \bar
s_{1n}+s_{1n} +i(\bar s_{2n}+s_{2n})
\end{matrix}
\]
They are arbitrary and we make the last
$m$  entries of the main diagonal of
$A$ equal to zero. This proves
\eqref{dfg1} for $n=2m$.

Let $n=2m+1$. Since $s_{m+2,m+1}=0$,
the $(m+1)$st entry of
$\mu^{-1}\varDelta A$ is
\[
\delta _{m+1,m+1}=\bar s_{m+1,m+1}+
s_{m+1,m+1}
\]
which is an arbitrary real number.
Thus, we can add $\mu r$ with an
arbitrary $r\in\mathbb R$ to the
$(m+1)$st diagonal entry of $A$.  This
proves \eqref{dfg1} for $n=2m+1$.

\subsubsection{Blocks ${\cal
D}\langle J_n(0)\rangle $} \label{sub1}

According to Corollary \ref{cor}(i), we
need to prove that each $n\times n$
matrix $A$ can be reduced to exactly
one matrix of the form $0^{\swvdash}$
by adding
\begin{multline}\label{ed2}
\varDelta A:=S^* J_n(0)+J_n(0)S
    \\=
\begin{bmatrix}
0+s_{21}&\bar s_{11}+s_{22}&\bar
s_{21}+s_{23} &\dots& \bar
s_{n-1,1}+s_{2n}
  \\
0+s_{31}&\bar s_{12}+s_{32}&\bar
s_{22}+s_{33} &\dots& \bar
s_{n-1,2}+s_{3n}
  \\
\hdotsfor{5}
  \\
0+s_{n1}&\bar s_{1,n-1}+s_{n2}&\bar
s_{2,n-1}+s_{n3} &\dots& \bar
s_{n-1,n-1}+s_{nn}
  \\
0+0&\bar s_{1n}+0&\bar s_{2n}+0 &\dots&
\bar s_{n-1,n}+0
\end{bmatrix}
\end{multline}
in which $S=[s_{ij}]$ is any $n\times
n$ matrix. Since
\begin{equation*}\label{uyhk}
\varDelta
A=[b_{ij}],\qquad
b_{ij}:=\bar
s_{j-1,i}+s_{i+1,j}
\qquad
(s_{0i}:=0,\quad
s_{n+1,j}:=0),
\end{equation*}
all entries of $\varDelta A$ have the
form $\bar s_{kl}+s_{l+1,k+1}$. Denote
by $\sim$ the equivalence relation on
$\{1,\dots,n\}\times \{1,\dots,n\}$
being the transitive and symmetric
closure of $(k,l)\sim(l+1,k+1)$.

Decompose $\varDelta A$ into the sum of
matrices
\begin{equation}\label{lip}
\varDelta
A=B_{11}+B_{12}
+\dots+B_{1n}+B_{21}+B_{31}+\dots+B_{n1}
\end{equation}
that correspond to the
equivalence classes and are defined as
follows:
\begin{itemize}
  \item Each $B_{1j}$
      $(j=1,2,\dots,n)$ is obtained
      from $\varDelta A$ by
      replacing by $0$ all its
      entries except for
\begin{equation}\label{hter}
\bar
s_{1j}+s_{j+1,2},\
\bar
s_{j+1,2}+s_{3,j+2},\
\bar
s_{3,j+2}+s_{j+3,4},\ \dots
\end{equation}

  \item Each $B_{i1}$
      $(i=2,3,\dots,n)$ is obtained
from $\varDelta A$ by replacing by
$0$ all its entries except for
\begin{equation}\label{hter1}
0+s_{i1},\
\bar
s_{i1}+s_{2,i+1}, \
\bar
s_{2,i+1}+s_{i+2,3}, \
\bar
s_{i+2,3}+s_{4,i+3}, \
\bar
s_{4,i+3}+s_{i+4,5},\ \dots
\end{equation}
\end{itemize}
The index pairs in \eqref{hter} and in
\eqref{hter1} are equivalent:
\begin{align*}
&(1,j)\sim(j+1,2)\sim
(3,j+2)\sim(j+3,4)\sim\cdots
  \\
&(i,1)\sim(2,i+1) \sim
(i+2,3)\sim
(4,i+3)\sim
(i+4,5)\sim\cdots
\end{align*}
We call the entries in \eqref{hter} and
\eqref{hter1} the \emph{main entries}
of $B_{1j}$ and $B_{i1}$ ($i>1$). The
summands $B_{11},\dots,B_{1n},B_{21},
\dots,B_{n1}$ in \eqref{lip} have no
common main entries, and so we can add
to $A$ each of these matrices
separately.

The members of the sequence
\eqref{hter} are independent: an
arbitrary sequence of complex numbers
can be represented in the form
\eqref{hter}. The members of
\eqref{hter1} are dependent only if the
last entry in this sequence has the
form $\bar s_{kn}+0$ (see \eqref{ed2}),
in which case $k$ is even, i.e.
$(k,n)=(2p,i+2p-1)$ for some $p$, and
so $i=n+1-2p$ in \eqref{hter1}. Thus
the following sequences \eqref{hter1}
are dependent:
\[
\begin{matrix}
0+s_{n-1,1},\ \bar s_{n-1,1}+s_{2n}, \
\bar s_{2n}+0
   \\
0+s_{n-3,1},\ \bar s_{n-3,1}+s_{2,n-2},
\ \bar s_{2,n-2}+s_{n-1,3}, \ \bar
s_{n-1,3}+s_{4n}, \ \bar s_{4n}+0\\
\hdotsfor{1}
\end{matrix}
\]
One of the main entries of each of the
matrices $B_{n-1,1}$, $B_{n-3,1}$,
$B_{n-5,1},\ \dots$ is expressed
through the other main entries of this
matrix, which are arbitrary. The main
entries of the other matrices $B_{i1}$
and $B_{1j}$ are arbitrary. Adding
$B_{i1}$ and $B_{1j}$, we reduce $A$ to
the form $0^{\swvdash}$.

\subsection{Off-diagonal blocks of ${\cal
D}\langle A_{\text{\rm can}}\rangle $
that correspond to summands of
$A_{\text{\rm can}}$ of the same type}

In this section we prove the assertion
(ii) of Corollary \ref{cor} for $M$ and
$N$ of the same type.

\subsubsection{Pairs of blocks ${\cal
D}\langle H_{2m}(\lambda),\,
H_{2n}(\mu)\rangle $ with $|\lambda
|,|\mu |>1$} \label{sub3}

According to Corollary \ref{cor}(ii),
we need to prove that each pair $(B,A)$
of $2n\times 2m$ and $2m\times 2n$
matrices can be reduced to exactly one
pair of the form \eqref{lsiu1} by
adding
\[
(S^*
H_{2m}(\lambda)+ H_{2n}(\mu)
R,\:R^*H_{2n}(\mu)
+H_{2m}(\lambda)S),\quad S\in
 {\mathbb C}^{2m\times 2n},\ R\in
 {\mathbb C}^{2n\times 2m}.
\]

Putting $R=0$ and
$S=-H_{2m}(\lambda)^{-1}A$, we reduce
$A$ to $0$. To preserve $A=0$, we must
hereafter take $S$ and $R$ such that
$R^*H_{2n}(\mu) +H_{2m}(\lambda)S=0$;
that is,
\[
S=-H_{2m}(\lambda)^{-1}
R^*H_{2n}(\mu)
\]
and so we can add to $B$ matrices of
the form
\[
\varDelta B:= -H_{2n}(\mu)^*
R
H_{2m}(\lambda)^{-*}H_{2m}(\lambda)+
H_{2n}(\mu) R
\]
in which $H_{2m}(\lambda)^{-*}:=
(H_{2m}(\lambda)^{-1})^*$.

 Write $
P:=-H_{2n}(\mu)^* R,$ then
$R=-H_{2n}(\mu)^{-*} P$ and
\begin{equation}\label{kif}
\varDelta B=
 P
\begin{bmatrix}
J_m(\lambda)&0\\
0&J_m(\bar{\lambda})^{-T}
\end{bmatrix}
- \begin{bmatrix}
J_n(\bar{\mu})^{-T}&0\\0&J_n(\mu)
\end{bmatrix} P.
\end{equation}

Partition $B$, $\varDelta B$, and $P$
into $n\times m$ blocks:
\[
B=\begin{bmatrix}
B_{11}&B_{12}\\
B_{21}& B_{22}
\end{bmatrix},\qquad
\varDelta
B=\begin{bmatrix}
\varDelta B_{11}&\varDelta
B_{12}\\ \varDelta
B_{21}&\varDelta B_{22}
\end{bmatrix},\qquad
P=
\begin{bmatrix}
X&Y\\Z&T
\end{bmatrix}.
\]
By \eqref{kif},
\begin{align*}
\varDelta
B_{11}&=XJ_m(\lambda)
-J_n(\bar{\mu})^{-T}X
  &
\varDelta
B_{12}&=YJ_m(\bar{\lambda})^{-T}
-J_n(\bar{\mu})^{-T}Y
 \\
\varDelta B_{21}&=
ZJ_m(\lambda)-J_n(\mu)Z
 &
\varDelta B_{22}&=
TJ_m(\bar{\lambda})^{-T}-J_n(\mu)T
\end{align*}

Thus, we can reduce each block $B_{ij}$
separately by adding $\varDelta
B_{ij}$.
\medskip

(i) Fist we reduce $B_{11}$ by adding
$\varDelta
B_{11}=XJ_m(\lambda)-J_n(\bar{\mu})^{-T}X$.
Since $|\lambda |>1$ and $|\mu|>1$, the
matrices $J_m(\lambda)$ and
$J_n(\bar{\mu})^{-T}$ have no common
eigenvalues, and so $\varDelta B_{11}$
is an arbitrary matrix. We make
$B_{11}=0$.
\medskip

(ii) Let us reduce $B_{12}$ by adding
$\varDelta B_{12}
=YJ_m(\bar{\lambda})^{-T}-J_n(\bar{\mu})^{-T}Y$.
If ${\lambda}\ne {\mu}$, then
$\varDelta B_{12}$ is arbitrary; we
make $B_{12}=0$. Let ${\lambda}={\mu}$.
Write $F:=J_n(0)$. Since
\[
J_n(\bar\lambda )^{-1}=(\bar\lambda I_n+F)^{-1}=
\bar\lambda ^{-1}I_n-\bar\lambda ^{-2}F+\bar\lambda ^{-3}F^2-
\cdots,
\]
we have
\begin{multline*}
\varDelta B_{12}
=Y(J_m(\bar{\lambda})^{-T}-\bar{\lambda}^{-1}
I_m)
-(J_n(\bar{\lambda})^{-T}-\bar{\lambda}^{-1}
I_n)Y
  \\=
-\bar{\lambda}^{-2}\begin{bmatrix}
y_{12}&\dots&y_{1m}&0 \\
y_{22}&\dots&y_{2m}&0 \\
y_{32}&\dots&y_{3m}&0 \\
\hdotsfor{4}
\end{bmatrix}
    +\bar{\lambda}^{-2}
\begin{bmatrix}
0&\dots&0 \\
y_{11}&\dots&y_{1m} \\
y_{21}&\dots&y_{2m} \\
\hdotsfor{3}\\
\end{bmatrix}
+\cdots
\end{multline*}
We reduce $B_{12}$ to the form
$0^{\nearrow}$ along its diagonals
starting from the upper right hand
corner.
\medskip

(iii) Let us reduce $B_{21}$ by adding
$\varDelta B_{21} =
ZJ_m(\lambda)-J_n(\mu)Z$. If
$\lambda\ne \mu$, then $\varDelta
B_{21}$ is arbitrary; we make
$B_{21}=0$. If $\lambda= \mu$, then
\begin{multline*}
\varDelta B_{21}
=Z(J_m(\lambda)-\lambda I_m)
-(J_n(\lambda)-\lambda I_n)Z
  \\=
\begin{bmatrix}
0&z_{11}&\dots&z_{1,m-1} \\
\hdotsfor{4}\\
0&z_{n-1,1}&\dots&z_{n-1,m-1} \\
0&z_{n1}&\dots&z_{n,m-1}
\end{bmatrix}
    -
\begin{bmatrix}
z_{21}&\dots&z_{2m} \\
\hdotsfor{3}\\
z_{n1}&\dots&z_{nm} \\
0&\dots&0
\end{bmatrix};
\end{multline*}
we reduce $B_{12}$ to the form
$0^{\swarrow}$ along its diagonals
starting from the lower left hand
corner.
\medskip

(iv) Finally, reduce $B_{22}$ by adding
$\varDelta B_{22} =
TJ_m(\bar{\lambda})^{-T}-J_n(\mu)T$.
Since $|\lambda |>1$ and $|\mu|>1$,
$\varDelta B_{22}$ is arbitrary; we
make $B_{22}=0$.

\subsubsection{Pairs of\/ blocks ${\cal D}
\langle \mu \Delta_m,\nu
\Delta_n)\rangle $ with $|\mu |=|\nu
|=1$} \label{sub7}

According to Corollary \ref{cor}(ii),
we need to prove that each pair $(B,A)$
of $n\times m$ and $m\times n$ matrices
can be reduced to $(0,\:0)$ if $\mu
\ne\pm\nu $ or to exactly one pair of
the form $(0^{\nwarrow},\:0)$ if $\mu
=\pm\nu $ by adding
\[
(\mu S^*
\Delta_m+ \nu \Delta_n
R,\: \nu R^*\Delta_n
+\mu \Delta_mS),\qquad S\in
 {\mathbb C}^{m\times n},\ R\in
 {\mathbb C}^{n\times m}.
\]

Taking $R=0$ and $S=-\bar\mu
\Delta_m^{-1}A$, we reduce $A$ to $0$.
To preserve $A=0$, we must hereafter
take $S$ and $R$ such that $\nu
R^*\Delta_n +\mu \Delta_mS=0$; that is,
$ S=-\bar\mu\nu \Delta_m^{-1}
R^*\Delta_n, $ and so we can add  to
$B$ matrices of the form
\[
 \varDelta B:=\nu \Delta_n R
-\mu^2\bar\nu\Delta_n^* R
\Delta_m^{-*}\Delta_m.
\]

Write $P:=\Delta_n^* R$, then
\begin{equation*}\label{due}
\varDelta B= \bar\nu[\nu^2(\Delta_n
\Delta_n^{-*})P -\mu^2 P
(\Delta_m^{-*}\Delta_m)].
\end{equation*}
Note that
\begin{equation}\label{1x12}
\Delta_n\Delta_n^{-*}= \Delta_n\begin{bmatrix}
*&&i&1
\\
&\udots&\udots\\
i&1\\
1&&&0
\end{bmatrix}
=
\begin{bmatrix} 1&&&0
\\2i&1&&\\
&\ddots&\ddots&\\
*&&2i&1
\end{bmatrix}
\end{equation}
and
\begin{equation}\label{1x11}
\Delta_m^{-*}\Delta_m=
(\Delta_n\Delta_n^{-*})^T=
\begin{bmatrix} 1&2i&&*
\\&1&\ddots&\\
&&\ddots&2i\\ 0 &&&1
\end{bmatrix}.
\end{equation}

If $\mu\ne \pm \nu$, then $\mu^2\ne
\nu^2$, the matrices $\nu^2(\Delta_n
\Delta_n^{-*})$ and $\mu^2
(\Delta_m^{-*}\Delta_m)$ have distinct
eigenvalues, and so $\varDelta B$ is
arbitrary. We make $B=0$.

If $\mu= \pm \nu$, then
\[
\frac{1}{2i\nu}\varDelta B=
\begin{bmatrix} 0&&&0
\\1&0&&\\
&\ddots&\ddots&\\
*&&1&0
\end{bmatrix}
P-P
\begin{bmatrix} 0&1&&*
\\&0&\ddots&\\
&&\ddots&1\\ 0 &&&0
\end{bmatrix}
\]
and we reduce $B$ to the form
$0^{\nwarrow}$ along its skew diagonals
starting from the upper left hand
corner.

\subsubsection{Pairs of blocks ${\cal D}
\langle J_m(0),J_n(0)\rangle $ with
$m\ge n$} \label{sub4}

According to Corollary \ref{cor}(ii) we
need to prove that each pair $(B,A)$ of
$n\times m$ and  $m\times n$ matrices
with $m\ge n$ can be reduced to exactly
one pair of the form $(0^{\swvdash},\:
0^{\swvdash})$ if $n$ is even or
$(0^{\swvdash}+{\cal
P}_{nm},\:0^{\swvdash})$ if $n$ is odd
by adding the matrices
\begin{equation*}\label{jfr}
\varDelta A=R^*J_n(0)
+J_m(0)S,\qquad \varDelta
B^*=J_m(0)^TS+
R^*J_n(0)^T
\end{equation*}
to $A$ and $B^*$ (we prefer to reduce
$B^*$ instead of $B$).

Write $S=[s_{ij}]$ and $R^*=[-r_{ij}]$
(they are $m$-by-$n$). Then
\begin{equation*}\label{drg}
\varDelta A=
\begin{bmatrix}
 s_{21}-0&s_{22}-r_{11}&s_{23}-r_{12}
 &\dots&s_{2n}-r_{1,n-1}
 \\ \hdotsfor{5}\\
 s_{m-1,1}-0&s_{m-1,2}-r_{m-2,1}
 &s_{m-1,3}-r_{m-2,2}
 &\dots&s_{m-1,n}-r_{m-2,n-1}\\
 s_{m1}-0&s_{m2}-r_{m-1,1}&s_{m3}-r_{m-1,2}
 &\dots&s_{mn}-r_{m-1,n-1}
 \\ 0-0&0-r_{m1}&0-r_{m2}&\dots&0-r_{m,n-1}
 \end{bmatrix}
\end{equation*}
and
\begin{equation*}\label{drgs}
\varDelta B^*=
\begin{bmatrix}
 0-r_{12}&0-r_{13}
 &\dots&0-r_{1n}&0-0
    \\
 s_{11}-r_{22}&s_{12}-r_{23}&\dots&
 s_{1,n-1}-r_{2n}
 &s_{1n}-0
   \\ \hdotsfor{5}\\
 s_{m-2,1}-r_{m-1,2}&s_{m-2,2}-r_{m-1,3}&
 \dots&s_{m-2,n-1}-r_{m-1,n}
 &s_{m-2,n}-0
 \\  s_{m-1,1}-r_{m2}&s_{m-1,2}-r_{m3}&
 \dots&s_{m-1,n-1}-r_{mn}
 &s_{m-1,n}-0
 \end{bmatrix}.
\end{equation*}
Adding $\varDelta A$, we reduce $A$ to
the form
\begin{equation}\label{gu}
0^{\downarrow}:=\begin{bmatrix}
   0_{m-1,n}
\\
  *\ *\ \cdots\ *
\end{bmatrix}.
\end{equation}
To preserve this form of $A$, we must
hereafter take
\begin{equation*}\label{VRS}
s_{21}=\dots=s_{m1}=0,\qquad
s_{ij}=r_{i-1,j-1}\quad
(2\le i\le m,\ 2\le
j\le n).
\end{equation*}
Write
\[
(r_{00},r_{01},\dots,r_{0,n-1}):=
(s_{11},s_{12},\dots,s_{1n}),
\]
then
\begin{equation*}\label{drgsu}
\varDelta B^*=
\begin{bmatrix}
 0-r_{12}&0-r_{13}
 &\dots&0-r_{1n}&0-0
    \\
 r_{00}-r_{22}&r_{01}-r_{23}&\dots&
 r_{0,n-2}-r_{2n}
 &r_{0,n-1}-0
    \\
0-r_{32}&r_{11}-r_{33}&\dots&
 r_{1,n-2}-r_{3n}
 &r_{1,n-1}-0
    \\
0-r_{42}&r_{21}-r_{43}&\dots&
 r_{2,n-2}-r_{4n}
 &r_{2,n-1}-0
   \\ \hdotsfor{5}\\
0-r_{m2}&r_{m-2,1}-r_{m3}&
 \dots&r_{m-2,n-2}-r_{mn}
 &r_{m-2,n-1}-0
 \end{bmatrix}.
\end{equation*}

If $r_{ij}$ and $r_{i'j'}$ are in
distinct diagonals of $\varDelta B^*$,
then $i-j\ne i'-j'$, and so
$(i,j)\ne(i',j')$. Hence, the diagonals
of $\varDelta B^*$ have no common
$r_{ij}$, and so we can reduce the
diagonals of $B^*$ independently.

The first $n$ diagonals of $\varDelta
B^*$ starting from the upper right hand
corner are
\[\begin{matrix}
0\\
-r_{1n},\: r_{0,n-1}
   \\
-\underline{r_{1,n-1}},\:
r_{0,n-2}-r_{2n},\:
\underline{r_{1,n-1}}
   \\
-r_{1,n-2},\: r_{0,n-3}-r_{2,n-1},\:
r_{1,n-2}-r_{3n},\: r_{2,n-1}
    \\
-\underline{r_{1,n-3}},\:
r_{0,n-4}-r_{2,n-2},\:
\underline{r_{1,n-3}-r_{3,n-1}},\:
r_{2,n-2}-r_{4n},\:
\underline{r_{3,n-1}}\\
\hdotsfor{1}
\end{matrix}\]
(we underline linearly dependent
entries); adding them we make the first
$n$ diagonals of $B^*$ as in
$(0^{\swvdash})^T$.

The $(n+1){\text{st}}$ diagonal of
$\varDelta B^*$ is
\[
  \begin{cases}
(r_{00}-r_{22},\,r_{11}-r_{33},\,\dots,\,
r_{n-2,n-2}-r_{nn}) & \text{if $m=n$}, \\
(r_{00}-r_{22},\,r_{11}-r_{33},\,\dots,\,
r_{n-2,n-2}-r_{nn},\,r_{n-1,n-1})
& \text{if $m>n$.}
  \end{cases}
\]
Adding it, we make the $(n+1)\text{st}$
diagonal of $B^*$ equal to zero.

If $m> n+1$, then the
$(n+2){\text{nd}}, \dots,m{\text{th}}$
diagonals of $\varDelta B^*$ are
\[
\begin{matrix}
-\underline{r_{32}},\,r_{21}-r_{43},\,
\underline{r_{32}-r_{54}},\,
\dots,\,r_{n,n-1}\\
\hdotsfor{1}\\
-\underline{r_{m-n+1,2}},\,
r_{m-n,1}-r_{m-n+2,3},\,
\underline{r_{m-n+1,2}-r_{m-n+3,4}},\,
\dots,\,r_{m-2,n-1}
\end{matrix}
\]
Each of these diagonals contains $n$
elements. If $n$ is even, then the
length of each diagonal is even and its
elements are linearly independent; we
make the corresponding diagonals of
$B^*$ equal to zero. If $n$ is odd,
then the length of each diagonal is odd
and the set of its odd-numbered
elements is linearly dependent; we make
all elements of the corresponding
diagonals of $B^*$ equal to zero except
for their last elements (they
correspond to the stars of ${\cal
P}_{nm}^*$ defined in \eqref{hui}).

It remains to reduce the last $n-1$
diagonals of $B^*$ (the last $n-2$
diagonals if $m=n$). The corresponding
diagonals of $\varDelta B^*$ are
\[
\begin{matrix}
-r_{m2}\\
-r_{m-1,2},\,
r_{m-2,1}-r_{m3}
    \\
 -{r_{m-2,2}},\,
r_{m-3,1}-r_{m-1,3},\,
{r_{m-2,2}-r_{m4}}
   \\
-{r_{m-3,2}},\,
r_{m-4,1}-r_{m-2,3},\,
{r_{m-3,2}-r_{m-1,4}},\,
r_{m-2,3}-r_{m5}
  \\ \hdotsfor{1}
    \\
-r_{m-n+3,2},\,
r_{m-n+2,1}-r_{m-n+4,3},\,\dots,\,
r_{m-2,n-3}-r_{m,n-1}
\end{matrix}
\]
and, only if $m>n$, one more diagonal
\[
-r_{m-n+2,2},\,
r_{m-n+1,1}-r_{m-n+3,3},\,\dots,\,
r_{m-2,n-2}-r_{mn}
\]
Adding these diagonals, we make the
corresponding diagonals of $B^*$ equal
to zero. To preserve the obtained zero
diagonals, we must hereafter take
$r_{m2}=r_{m4}=r_{m6}=\dots=0$ and
arbitrary
$r_{m1},\,r_{m3},\,r_{m5},\,\dots\,$.

Recall that $A$ has the form
$0^{\downarrow}$ (see \eqref{gu}).
Since
$r_{m1},\,r_{m3},\,r_{m5},\,\dots$ are
arbitrary, we can reduce $A$ to the
form
\[
\begin{bmatrix}
   0_{m-1,n}
\\
  *\ 0\ *\ 0\ \cdots
\end{bmatrix}
\]
by adding $\varDelta A$; these
additions preserve $B^*$.

If $m=n$, then $A$ can be alternatively
reduced to the form
\[
\begin{bmatrix}
\hdotsfor{4}
\\ 0&0&\dots&0
 \\ *&0&\dots&0
 \\ 0&0&\dots&0
 \\ *&0&\dots&0
\end{bmatrix}
\]
preserving the form $0^{\swvdash}$ of
$B$.

\subsection{Off-diagonal blocks of ${\cal
D}\langle A_{\text{\rm can}}\rangle $
that correspond to summands of
$A_{\text{\rm can}}$ of distinct
types}\label{s7}

Finally, we prove the assertion (ii) of
Corollary \ref{cor} for $M$ and $N$ of
distinct types.

\subsubsection{Pairs of blocks ${\cal D}
\langle H_{2m}(\lambda ), \mu
\Delta_n\rangle $ with $|\lambda|>1$
and $|\mu |=1$} \label{sub9}

According to Corollary \ref{cor}(ii) we
need to prove that each pair $(B,A)$ of
$n\times 2m$ and $2m\times n$ matrices
can be reduced to the pair $(0,0)$ by
adding
\[
(S^*
H_{2m}(\lambda )+ \mu\varDelta _n
R, \:R^* \mu  \varDelta _n
+H_{2m}(\lambda )S),\qquad S\in
 {\mathbb C}^{2m\times n},\ R\in
 {\mathbb C}^{n\times 2m}.
\]

Reduce $A$ to $0$ by this addition with
$R=0$ and $S=-H_{2m}(\lambda )^{-1}A$.
To preserve $A=0$, we must hereafter
take $S$ and $R$ such that $R^* \mu
\Delta_n +H_{2m}(\lambda )S=0$; that
is,
\[
S=-H_{2m}(\lambda )^{-1}
R^* \mu \Delta_n.
\]
Hence, we can add  to $B$ matrices of
the form
\[ \varDelta B :=
 \mu \Delta_n R
- \bar{\mu } \Delta_n^* R
H_{2m}(\lambda )^{-*}H_{2m}(\lambda ).
\]
Write $P:=\bar{\mu} \Delta_n^* R$, then
\[
 \varDelta B={\mu}{\bar{\mu}^{-1}} \Delta_n
 \Delta_n^{-*} P
-P
\left({J}_m(\lambda )
\oplus{J}_m(\bar{\lambda })^{-T}\right).
\]
By \eqref{1x12},
${\mu}{\bar{\mu}^{-1}}$ of modulus $1$
is the only eigenvalue of
${\mu}{\bar{\mu}^{-1}} \Delta_n
 \Delta_n^{-*}$. Since $|\lambda
 |>1$, ${\mu}{\bar{\mu}^{-1}}
\Delta_n
 \Delta_n^{-*}$ and ${J}_m(\lambda )
\oplus{J}_m(\bar{\lambda })^{-T}$ have
no common eigenvalues. Thus, $\varDelta
B$ is an arbitrary matrix and we can
make $B=0$.

\subsubsection{Pairs of blocks ${\cal D}
\langle H_{2m}(\lambda),J_n(0)\rangle $
with $|\lambda|>1$} \label{sub5}

According to Corollary \ref{cor}(ii),
we need to prove that each pair $(B,A)$
of $n\times 2m$ and $2m\times n$
matrices can be reduced to $(0,\:0)$ if
$n$ is even or to exactly one pair of
the form $( 0^{\updownarrow},\:0)$ if
$n$ is odd by adding
\[
(S^*
H_{2m}(\lambda)+ J_n(0)
R,\:R^*J_n(0)
+H_{2m}(\lambda)S),\qquad S\in
 {\mathbb C}^{2m\times n},\ R\in
 {\mathbb C}^{n\times 2m}.
\]

Putting $R=0$ and
$S=-H_{2m}(\lambda)^{-1}A$, we reduce
$A$ to $0$. To preserve $A=0$, we must
hereafter take $S$ and $R$ such that
$R^*J_n(0) +H_{2m}(\lambda)S=0$; that
is, we take
\[
S=-H_{2m}(\lambda)^{-1}
R^*J_n(0).
\]
Hence we can add to $B$ matrices of the
form
\begin{align*}
 \varDelta B:=& {J}_n(0) R
-{J}_n(0)^T R
H_{2m}(\lambda)^{-*}H_{2m}(\lambda)
        \\
=& {J}_n(0)
R-{J}_n(0)^T R
\left({J}_m(\lambda)
\oplus{J}_m(\bar{\lambda})^{-T}\right).
\end{align*}

Let us partition $B$ and $R$ into
$n\times m$ blocks: $ B=[M\ N],$ $R=[U\
V]$. We can add to the blocks $M$ and
$N$ of $B$ matrices of the form
\[
\varDelta M:=
{J}_n(0)U-{J}_n(0)^TU{J}_m(\lambda),\qquad
\varDelta N:=
{J}_n(0)V-{J}_n(0)^TV{J}_m(\bar{\lambda})^{-T}.
\]

We reduce $M$  as follows. Let
$(u_1,u_2,\dots,u_n)^T$ be the first
column of $U$. Then we can add to the
first column $M_1$ of $M$ the vector
\begin{align*}
\varDelta M_1:=&
(u_2,\,\dots,\,u_n,0)^T-
\lambda
(0,u_1,\,\dots,\,u_{n-1})^T
         \\
=&
(u_2,\,u_3-\lambda
u_1,\,u_4-\lambda
u_2,\,\dots,\,u_n-\lambda
u_{n-2},\,-\lambda
u_{n-1})^T
\end{align*}
($\varDelta M_1=0$ if $n=1$). The
elements of this vector are linearly
independent if $n$ is even, and they
are linearly dependent if $n$ is odd.
We reduce $M_1$ to zero if $n$ is even,
and to the form $(*,0,\dots,0)^T$ or
$(0,\dots,0,*)^T$ if $n$ is odd. To
preserve this form in the latter case,
we must hereafter take
$u_2=u_4=u_6=\dots=0$.

Then we successively reduce the other
columns transforming $M$ to $0$ if $n$
is even or to the form
$0_{nm}^{\updownarrow}$ if $n$ is odd.

We reduce $N$ in the same way starting
from the last column.

\subsubsection{Pairs of blocks ${\cal D}
\langle \lambda \Delta_m,J_n(0)\rangle
$ with $|\lambda|=1$} \label{sub8}

According to Corollary \ref{cor}(ii),
we need to prove that each pair $(B,A)$
of $n\times m$ and  $m\times n$
matrices can be reduced to $(0,\:0)$ if
$n$ is even or to exactly one pair of
the form $(0^{\updownarrow},\:0)$ if
$n$ is odd by adding
\[
(S^*
\lambda \Delta_m+
J_n(0)R,\:R^*J_n(0)
+\lambda \Delta_mS),\qquad S\in
 {\mathbb C}^{m\times n},\ R=[r_{ij}]\in
 {\mathbb C}^{n\times m}.
\]

We reduce $A$ to $0$ by putting $R=0$
and $S=-\bar{\lambda} \Delta_m^{-1}A$.
To preserve $A=0$, we must hereafter
take $S$ and $R$ such that $R^*J_n(0)
+\lambda \Delta_mS=0$; that is, $
S=-\bar{\lambda} \Delta_m^{-1}
R^*J_n(0).$ By \eqref{1x11}, we can add
\begin{align*}
\varDelta B :=& J_n(0)R-{\lambda}^2
J_n(0)^TR \Delta_m^{-*}\Delta_m \\
       =&\begin{bmatrix}
r_{21}&\dots&r_{2m}
           \\
\hdotsfor{3}\\
r_{n1}&\dots&r_{nm}\\
0&\dots&0
\end{bmatrix}
           -{\lambda}^2
\begin{bmatrix}
0&\dots&0\\
r_{11}&\dots&r_{1m}
           \\
\hdotsfor{3}\\
r_{n-1,1}&\dots&r_{n-1,m}
\end{bmatrix}
\begin{bmatrix} 1&2i&&*
\\&1&\ddots&\\
&&\ddots&2i\\ 0 &&&1
\end{bmatrix}
\end{align*}
to $B$. We reduce $B$ to $0$ if $n$ is
even or to the form $0^{\updownarrow}$
if $n$ is odd along its columns
starting from the first column.

\section*{Acknowledgements}

A. Dmytryshyn was supported by the
Swedish Research Council (VR) under
grant A0581501, and by eSSENCE, a
strategic collaborative e-Science
programme funded by the Swedish
Research Council. V. Futorny was
supported by the CNPq (grant
301743/2007-0) and FAPESP (grant
2010/50347-9). This work was done
during two visits of V.V. Sergeichuk to
the University of S\~ao Paulo. He is
grateful to the University of S\~ao
Paulo for hospitality and to the FAPESP
for financial support (grants
2010/07278-6 and 2012/18139-2).

\end{document}